\newcommand{\PP}{\mathbb{P}}
\newcommand{\ZZ}{\mathbb{Z}}
\newcommand{\NN}{\mathbb{N}}
\newcommand{\Nc}{\mathcal{N}}
\newcommand{\CC}{\mathbb{C}}
\newcommand{\RR}{\mathbb{R}}
\newcommand{\Hi}{\mathscr{H}}
\newcommand{\Ex}{\mathop{\mathbb{E}}}
\newcommand{\dprod}[2]{\left\langle #1,#2\right\rangle}
\newcommand{\norm}[1]{\left\lVert #1\right\rVert}
\newcommand{\proj}[1]{\left|#1\right\rangle\left\langle#1\right|}
\newcommand{\matx}[1]{\left(\begin{matrix} #1 \end{matrix}\right)}
\newcommand{\clsp}[1]{\overline{\left\langle #1 \right\rangle}}
\newtheorem{theorem}{Theorem}[section]
\newtheorem{lemma}[theorem]{Lemma}
\newtheorem{remark}[theorem]{Remark}
\numberwithin{equation}{section}
\newtheorem{corollary}[theorem]{Corollary}
\newtheorem{hypothesis}[theorem]{Hypothesis}
\begin{document}


\title{Global multiplicity bounds and Spectral Statistics for Random Operators}

\author{
ANISH MALLICK \\ 
Pontificia Universidad Cat\'{o}lica de Chile\\
Rolando Chuaqui Building, San Joaquin Campus. \\
Avda. Vicu\~{n}a Mackenna 4860, Macul, Chile.\\
e-mail: anish.mallick@mat.uc.cl\\
~\\
KRISHNA MADDALY \\ 
Ashoka University \\
Plot No 2, Rajiv Gandhi Education City,\\
Rai 131029, Haryana, India. \\
e-mail: krishna.maddaly@ashoka.edu.in
}


\maketitle


\begin{abstract}
In this paper, we consider Anderson type operators on a separable Hilbert space where the random perturbations are finite rank and the random variables have full support on $\RR$.
We show that spectral multiplicity has a uniform lower bound whenever the lower bound is given on a set of positive Lebesgue measure on the point spectrum away from the continuous one.
We also show a deep connection between the multiplicity of pure point spectrum and local spectral statistics, in particular we show that spectral 
multiplicity higher than one always gives non-Poisson local statistics in the framework of Minami theory.

In particular for higher rank Anderson models with pure-point spectrum, with the randomness having support equal to $\RR$, 
 there is a uniform lower bound on spectral multiplicity and in case this is larger than one the local statistics is not Poisson.
\end{abstract}

{\bf keywords:} Spectral Theory, Random Operators, Perturbation Theory.\\

{\bf MSC-2010:} 81Q10, 47A10, 47A55, 47H40.\\

\section{Introduction}
Random operators are an important field of study for various reasons.
Over the years much focus is given to a certain class of random operator 
like Anderson tight binding model, continuum random Schr\"{o}dinger operator, multi-particle Anderson model and many others.
Some of these models were initially developed to study localization phenomenon and a lot of research is focused on showing the existence of pure point spectrum and exponentially decaying Green's function.

The model considered in this paper is 
\begin{equation}\label{OpDefEq1}
H^\omega=H_0+\sum_{n\in\Nc} \omega_n P_n.
\end{equation}
Typically one takes $H_0$ to be the $-\Delta$ on $L^2(\RR^d)$ with possibly
a vector potential or a periodic background potential added and  
the adjacency operator on $\ell^2(\ZZ^d)$ with $\{\omega_n\}$
 independent random variables with $\{P_n\}_n$ a countable collection of projections.  In
such a setting there are several questions relating to these operators
 that are of interest.  In the mid-fifties
Anderson \cite{PA} proposed that for large disorder the models on
$\ell^2(\ZZ^d)$ should exhibit localization.
Several rigorous results on localization
followed from the early eighties starting with the work of Fr\"{o}hlich-Spencer
\cite{FS} who formulated multi scale analysis. Some of the papers 
on localization for large disorder are
\cite{AENSS, AM, CMS1, CH1, CH2, DLS, FMSS, GK2, GK1, KSS1, SW} and \cite{B1,KN1,DL,HKM}.  
For a comprehensive study of the subject we refer to any of \cite{AW2, CL, FP, K2, PS1} and the references there.

The next set of questions concern the simplicity of the
spectrum and in this direction there are several papers starting from Simon \cite{BS2},
Jak\v{s}i\'{c}-Last \cite{JL1, JL2}, Naboko-Nichols-Stolz \cite{NNS}, Mallick
\cite{AD2, AD1, AM1, AM2, AM3}. From these set of papers, we now know that when the 
rank of $P_n$ is one or for some special cases of higher rank $P_n$,
the singular spectrum is simple.
In the papers \cite{AD1,AN1}, the authors show that there are non-trivial example of random operators where the singular spectrum has non-trivial multiplicity.

Another set of questions of interest are the local spectral statistics
and or  level spacing of the eigenvalues.  The first rigorous  work 
of Molchanov \cite{Mol} led later to the Minami Theory \cite{NM}, which
 establishes a set of sufficient conditions for the local 
spectral statistics to be Poisson.  There are several papers on
local spectral statistics on discrete models such as \cite{AM, AW4, AD3, CGK, DK, GK, FC, KN, MD, Na2, Na1, TV}.
Dietlein and Elgart showed Minami like estimate and thereby showing Poisson local statistics at the spectral edge in case of random Schr\"{o}dinger operators in \cite{DE}.
Their method involves a detailed analysis of the behavior of clusters of eigenvalues possible in the spectrum.

Minami theory \cite{NM} involves looking at the region of complete
exponential localization and prove an inequality now known as
the Minami estimate as part of the proof to show the local 
statistics is Poisson. It was not clear how crucial
the Minami estimate is for determining the local statistics.
Recently Hislop-Krishna \cite{HK}, showed that in all the above models 
exponential localization and Wegner estimate together imply that the
local spectral statistics, whenever it exists, is always Compound Poisson.
The Minami estimate assures us that the L\'evy measure
associated with the limiting infinitely divisible distribution has
support at $\{1\}$ ensuring that the distribution is Poisson.
To state differently, Minami estimate rules out the possibility 
of the limit points, of an array of independent random variables
that are usually constructed in the problem of obtaining local statistics, 
having double points.

The work of Klein-Molchanov \cite{KM2} showed that in the presence of
exponential localization, Minami estimate implies that the point
spectrum is simple when $P_n$ has rank one. The results of the
two papers \cite{KM2} and \cite{HK} raise
an interesting question of what the connection between spectral
multiplicity and the Minami estimate could be.  Our motivation for 
this exposition is to address this question.  
In this paper we consider general unperturbed operators and finite rank $P_n$ 
for the case when the 
distribution of the single site potential has support equal to $\RR$.
We have two very surprising results in this paper, the first is that
{\it the spectral multiplicity of pure point spectrum in any set of positive 
Lebesgue measure gives a lower bound for the spectral multiplicity everywhere in the pure point spectrum.}
The second is that   
{\it Minami estimate gives a sufficient condition for the simplicity
of pure point spectrum even in the case when the rank of $P_n$
 is not one.
}
To our knowledge, these results are new in this generality. 
 This result is not vacuous because Dietlein and Elgart \cite{DE} proved Minami estimate in continuous case, 
though the expression they obtained is similar to expression from Theorem \ref{uniformsingular}. 
Another example is given after the statement of Theorem \ref{uniformsingular}.

The family of random operator we focus on here can be described as follows.
On a separable Hilbert space $\Hi$ the random operators we will focus on are 
of the form given in equation (\ref{OpDefEq1}), satisfying the
following assumptions.

\begin{hypothesis}\label{hyp1}
We assume that $H_0$ is an essentially self-adjoint operator with its 
domain of definition denoted by $\mathcal{D}(H_0)$, 
$\{P_n\}_{n\in\Nc}$ is a countable 
collection of  mutually orthogonal finite rank projections with $$\sum_{n\in\Nc}P_n=I.$$
The real-valued random variables $\{\omega_n\}_{n\in\Nc}$ are mutually 
independent and are distributed with respect to the 
absolutely continuous distribution $g_n(x)dx$, where  
$supp(g_n)=\RR$ for each $n\in\Nc$. 
\end{hypothesis}

In the case of unbounded $H_0$ for example the $-\Delta$
on $L^2(\RR^d)$ with its maximal domain $D(H_0)$, 
taking a countable basis in $D(H_0)$ and considering the finite rank projections generated by them will satisfy the hypothesis.
In case of $-\Delta$, another method is to use smooth basis from $L^2([0,1]^d)$ and translate them using $\ZZ^d$ action to get the desired result.
Anderson tight binding model and Anderson dimer/polymer models fall in this family of operators. 
We will focus on the pure point spectrum away from the continuous spectrum.
Our theorems are valid for several non-ergodic operators, so to 
accommodate those cases, we define
\begin{align*}
\Sigma^\omega=\bigcap_{\substack{B\subset \Nc\\ \# B<\infty}}\bigcap_{n\in B}\left\{E\in\RR:  \lim_{\epsilon\downarrow 0}\norm{\left(H^\omega+\sum_{x\in B} \lambda_x P_x-E-\iota\epsilon\right)^{-1}P_n}<\infty\right.\\
 \text{for almost all $\{\lambda_x\}_{x\in B}$ w.r.t. Lebesgue measure }\bigg\},
\end{align*}
and 
$$\Theta_p=\bigcap_{n\in\Nc}\left\{E\in\RR:  \lim_{\epsilon\downarrow 0}\norm{(H^\omega-E-\iota\epsilon)^{-1}P_n}<\infty~a.s.\right\}.$$
 A discerning reader may note that the condition in the definition of $\Theta_p$ is precisely the  Simon-Wolff criterion 
for pure point spectrum for $H + \lambda P$ a.e. $\lambda$, 
when $P$ has rank one.
For the Anderson model on the lattice, even for higher rank cases, 
the Aizenman-Molchanov method can be used to prove
 localization on $\RR \setminus [-R, R]$ for large enough $R$ under our 
assumptions on $\omega_n$.
So for such models, the set $\Theta_p$ has infinite Lebesgue measure under the Hypothesis \eqref{hyp1}.
Note that in the definition of $\Sigma^\omega$, we are considering perturbations by finitely many projections because if 
$$\lim_{\epsilon\downarrow 0}\norm{\left(H^\omega+\sum_{x\in B} \lambda_x P_x-E-\iota\epsilon\right)^{-1}P_n}<\infty$$
for some $E\in\RR$ and $\{\lambda_x\}_x$, then above equation holds for almost all $\{\lambda_x\}_x$ w.r.t. Lebesgue measure. 

We define  $H_{B}^\omega = P_B H^\omega P_B$, where 
$P_B$ is the orthogonal projection given by $\sum_{n \in B} P_n$, for 
any subset $B \subset {\mathcal N}. $ 
With these definition in place we state a principal result which is the following:
\begin{theorem}\label{mainThm}
Consider the random operators $H^\omega$ on a separable Hilbert 
space $\Hi$, given by equation (\ref{OpDefEq1}), satisfying the 
Hypothesis \eqref{hyp1}.  Assume further that 
$range(P_n)\subset \mathcal{D}(H_0)$ for every $n\in\Nc$.
Suppose there is a subset 
$J \subset \Theta_p$, of positive Lebesgue measure
and a $K < \infty$ such that the multiplicity of eigenvalues of 
$H^\omega$ is bounded below by $K$ on $J$ for a.e. $\omega$. Then, 
\begin{enumerate}
\item Then for every finite $B \subset {\mathcal N}$, the 
multiplicity of any eigenvalue of the  operator
$H_B^\omega: P_B\Hi\rightarrow P_B \Hi$
is bounded below by $K$ for a.e. $\omega$. 
\item The multiplicity of $H^\omega$ on $\Sigma^\omega$ is bounded below by $K$ for almost every $\omega$.
\end{enumerate}
\end{theorem}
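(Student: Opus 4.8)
The plan is to reduce the (possibly infinite-dimensional) multiplicity question to a rank condition on finite matrices, and then to propagate the hypothesis from a positive-measure set in energy to the whole pure point spectrum by real-analyticity in the coupling constants. First I would set up the finite-dimensional reduction. For a finite $B\subset\Nc$ write $P_B=\sum_{n\in B}P_n$ and split $\Hi=P_B\Hi\oplus(I-P_B)\Hi$. Since the potential $\sum_n\omega_nP_n$ is block diagonal for this splitting, the off-diagonal blocks of $H^\omega$ come only from $H_0$, and the Feshbach--Schur reduction expresses the compressed resolvent as
\[
M_B(z):=P_B(H^\omega-z)^{-1}P_B=\left(H_B^\omega-z-R_B(z)\right)^{-1},
\]
where the self-energy $R_B(z)=P_BH_0(I-P_B)\left[(I-P_B)(H^\omega-z)(I-P_B)\right]^{-1}(I-P_B)H_0P_B$ depends on $\{\omega_n\}_{n\notin B}$ but not on $\{\omega_n\}_{n\in B}$. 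For $E\in\Theta_p$ the defining resolvent bounds make $R_B(E)$ a finite boundary value and, after checking that $\psi\mapsto P_B\psi$ is injective on the eigenspace $\ker(H^\omega-E)$ (which holds because $E$ is a.s. not an eigenvalue of the complementary block $(I-P_B)H^\omega(I-P_B)$), they give the identity
\[
\dim\ker(H^\omega-E)=\dim\ker S_B(E),\qquad S_B(E):=H_B^\omega-E-R_B(E),
\]
so that the multiplicity equals the rank deficiency of the $d_B\times d_B$ matrix $S_B(E)$, equivalently the rank of the residue of $M_B$ at $E$. Thus ``multiplicity $\ge K$ at $E$'' becomes the vanishing of all minors of $S_B(E)$ of size $d_B-K+1$.

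Next I would convert the positive-measure information in energy into positive-measure information in the couplings, and then propagate by analyticity. Because each $P_n\ge0$, the rank-perturbation (Hellmann--Feynman) inequality shows every eigenvalue branch of $H^\omega$ is nondecreasing in each $\omega_n$, and by Hypothesis \eqref{hyp1} the laws of the $\omega_n$ have full support, so as a single coupling $\omega_{n_0}$ ranges over $\RR$ the branches sweep monotonically across energy; since $\lvert J\rvert>0$, the set of $\omega_{n_0}$ for which a given branch lies in $J$ has positive Lebesgue measure, and there the hypothesis forces the size-$(d_B-K+1)$ minors of $S_B$ along the branch to vanish. For $\mathrm{Im}\,z>0$ the resolvent, hence $M_B(z)$, $R_B(z)$ and these minors, are jointly real-analytic in $(\omega_{n_0},z)$, and the boundedness built into $\Theta_p$ lets the boundary values continue analytically across the relevant energies. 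A real-analytic function vanishing on a set of positive measure vanishes identically, so the minors vanish along the entire branch; letting $\omega_{n_0}$ run over $\RR$ and integrating out the remaining couplings by Fubini gives, for a.e. $\omega$, multiplicity $\ge K$ throughout the pure point spectrum.

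This already delivers the second assertion, since $\Sigma^\omega$ is precisely the perturbation-stable version of the boundedness used above: its definition supplies exactly the resolvent bounds needed to justify the analytic continuation under the finite perturbations $\sum_{x\in B}\lambda_xP_x$ that the sweeping introduces. For the first assertion I would run the same argument inside the finite-dimensional space $P_{B'}\Hi$ with $B'\uparrow\Nc$: since $P_{B'}\Pi_EP_{B'}\to\Pi_E$, the residue identity transfers the bound from $H^\omega$ to $H_{B'}^\omega$ for large $B'$, and passing from $H_{B'}^\omega$ to its compression $H_B^\omega=P_BH_{B'}^\omega P_B$ for $B\subset B'$ is exactly the finite-dimensional instance of the Feshbach--Schur plus analyticity argument, now varying the couplings $\{\omega_n\}_{n\in B'\setminus B}$.

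The main obstacle I expect is the passage to the boundary $\epsilon\downarrow0$ underlying every step: the residues and minors are defined through limits, and the whole scheme rests on showing that these limiting objects form a genuine real-analytic family of the couplings across $J$, so that ``positive measure implies identically zero'' is legitimate. This is exactly where $E\in\Theta_p$ and the definition of $\Sigma^\omega$ are indispensable, since they guarantee that $\norm{(H^\omega+\sum_{x}\lambda_xP_x-E-\iota\epsilon)^{-1}P_n}$ stays bounded and continues across the relevant energies under the finite perturbations used in the sweeping. Two further technical points need care: the injectivity of $\psi\mapsto P_B\psi$ on each eigenspace, ensuring the finite-dimensional kernel dimension really equals the multiplicity, and the semicontinuity and jump behaviour of the rank as the couplings vary, which must be reconciled with the analytic-continuation argument.
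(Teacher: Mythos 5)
Your skeleton (Feshbach--Schur reduction, recasting multiplicity as rank deficiency of a finite matrix, propagation from a positive-measure set) points in the same general direction as the paper, but the step that carries all the weight --- the propagation --- is wrong as stated. You claim that ``the boundedness built into $\Theta_p$ lets the boundary values continue analytically across the relevant energies,'' and you then invoke the principle that a real-analytic function vanishing on a positive-measure set vanishes identically. There is no such analytic continuation: $J$ lies inside the pure point spectrum, which in this setting is typically \emph{dense} in the relevant region, so the resolvent --- and with it $M_B$, $R_B(z)$ and every minor of $S_B$ --- has singularities on a dense subset of any real interval meeting $J$. Membership in $\Theta_p$ only guarantees that the non-tangential limits $R_B(E+\iota 0)$ exist and are finite for a.e.\ $E$; as functions of $E$ these boundary values are merely measurable (finite a.e.\ but blowing up on the dense set of eigenvalues), so they are not real-analytic in $E$ and the identity theorem has nothing to act on. The correct tool, and the crux of the paper's argument (Lemma \ref{lem1UniMultRes}), is Privalov's boundary uniqueness theorem: the condition ``all roots of $\det(G(z)-xI)$ have multiplicity $\geq K$'' is encoded, via a gcd/remainder construction, in the vanishing of finitely many functions that are \emph{analytic in $z$ on $\CC^{+}$}; their boundary values vanish on the positive-measure set $J$, hence they vanish identically on $\CC^{+}$. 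Your eigenvalue-branch sweeping suffers from the same defect one level down: even granting monotone branches through dense point spectrum (itself problematic --- there are no global analytic branches there, and a monotone branch passing through $J$ need not spend a set of couplings of positive measure in $J$), the vanishing you obtain lives on a curve of spectral points in the $(\omega_{n_0},E)$-plane, exactly where no analyticity is available.

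A second genuine gap is your identity $\dim\ker(H^\omega-E)=\dim\ker S_B(E)$ for a fixed finite $B$. This is false in general: an eigenvector $\psi$ of $H^\omega$ orthogonal to the cyclic subspace $\Hi^\omega_B$ satisfies $P_B\psi=0$ and is automatically an eigenvector of the complementary block $(I-P_B)H^\omega(I-P_B)$ at the same energy, and your justification (``$E$ is a.s.\ not an eigenvalue of the complementary block'') cannot be run, because $E$ is itself a random eigenvalue correlated with that block --- think of a direct sum of two independent Anderson models with $B$ contained in one summand. This is precisely why the paper's Lemma \ref{lem0KerEquiRes} establishes the kernel bijection only for the restriction of $H_\lambda$ to the invariant subspace $\Hi_P$, why it first performs the orthogonal change of variables \eqref{opDefEq2} so that a \emph{single} scalar coupling multiplies the whole projection $P_B$ (varying one $\omega_{n_0}$ perturbs by $P_{n_0}$ and does not mesh with the kernel bijection for $P_B$), and why part (2) of Theorem \ref{mainThm} is recovered only through the additional chain of Lemma \ref{lem2StabPPSpec} (which needs $\Im G(E+\iota 0)=0$ on $\hat S$ so that algebraic and geometric multiplicities agree, plus spectral averaging to dispose of an exceptional null set) and the density argument $\Hi^\omega_N\uparrow\Hi$ of Corollary \ref{cor2ppSpecMult}. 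Your part (1), by contrast, is close in spirit to the paper's Corollary \ref{cor1MultCutoffOp}, which uses the representation \eqref{cor1MultEq1} and the limit $\Im z\to\infty$; but without the Privalov step and the invariant-subspace bookkeeping above, the proposal as written does not yield the theorem.
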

\begin{remark}
A few comments are in order before we proceed further.
\begin{enumerate}
\item When $H_0$ is unbounded and $P_n$ satisfy the Hypothesis \ref{hyp1},
it may still happen that the $H^\omega$ are not essentially self-adjoint. 
However there are numerous examples where they are indeed self-adjoint.

\item The hypothesis $supp(g_n)=\RR$ is essential. This is demonstrated by following example:
On the Hilbert space $\ell^2(\ZZ\times\{1,\cdots,5\})$ consider the random operator
$$(H^\omega u)_{n,m}=
\left\{\begin{matrix} 
[u_{n+1,m}+u_{n-1,m}]+\omega_n u_{n,m} & m=1,2 \\ 
2[u_{n+1,m}+u_{n-1,m}]+\omega_n u_{n,m} & m=3,4,5 \end{matrix}
\right.~~~\forall n\in\ZZ, 1\leq m\leq 5,$$
for $u\in \ell^2(\ZZ\times\{1,\cdots,5\})$, where $\{\omega_n\}_{n\in\ZZ}$ are i.i.d random variables following uniform distribution on $[0,1]$.
The projections
$$(Q_i u)_{n,m}=\left\{\begin{matrix}u_{n,i} & m=i\\ 0 & o.w\end{matrix}\right.\qquad \forall (n,m)\in\ZZ\times\{1,\cdots,5\},$$
commute with $H^\omega$ and $Q_iH^\omega Q_i$ is the 
Anderson operator on $\ell^2(\ZZ\times\{i\})$, so it has pure point spectrum.
We notice that $Q_iH^\omega Q_i$ for $i=3,4,5$ are unitarily equivalent, so all the eigenvalues coincide.
Hence the multiplicity of $H^\omega$ is three on the interval $(3,5)$. 
But on the interval $(-2,3)$, the multiplicity is bounded below by $2$, hence the conclusion of theorem \ref{mainThm} fails to hold.
On other hand if we choose $\omega_n$ to be i.i.d random variable following some distribution $g(x)dx$ with $supp(g)=\RR$, 
then using the fact that the spectrum of $Q_1H^\omega Q_1$ is dense in $\RR$, 
we see that the minimum multiplicity of eigenvalues on a given interval is two.
\end{enumerate}
\end{remark}

We define the random variables,  
$$
\eta_{B,J}(\omega) = Tr(E_{H_{B}^\omega}(J)),
$$
for any interval $J$,  where $E_{H_{B}^\omega}$ denotes the spectral 
projection for the operator $H^\omega_B$.

Our main theorem has two remarkable consequences. The first is on the 
multiplicity of the pure point spectrum if Minami estimate \cite{NM},
namely, 
\begin{equation}\label{min-est}
\PP(\{\omega: \eta_{B, J}(\omega) \geq 2\})  \leq C |B|^2 |J|^2,
\end{equation}
holds for any finite $B$ and the constant $C$ independent of 
$B, J$.

\begin{theorem}\label{simplicity}
Consider the operators $H^\omega$ satisfying the Hypothesis \eqref{hyp1}
and let $H^\omega_{\Hi^\omega_B}$ denote $H^\omega$
restricted to the closed $H^\omega$-invariant subspace 
$$\Hi^\omega_B=\overline{\{f(H^\omega)\phi: f\in C_c(\RR),\phi\in P_B\Hi\}}.$$ 
Suppose that $range(P_n)\subset\mathcal{D}(H_0)$ for all $n\in\Nc$ and there is a non-trivial interval 
$I \subset \RR$ and a finite $B \subset \Nc$
 such that
the Minami estimate (\ref{min-est}) holds for 
every subinterval $J \subset I$.  Then the spectrum of 
$H^\omega_{\Hi^\omega_B} $ in $\Sigma^\omega$ is simple.
\end{theorem}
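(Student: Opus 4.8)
The plan is to argue by contradiction, turning a failure of simplicity into a violation of the Minami estimate (\ref{min-est}), with Theorem \ref{mainThm} serving as the bridge. Throughout, $B$ and $I$ are the data supplied by the hypothesis.

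First I would record the standard consequence of the Minami estimate. Partitioning a bounded subinterval of $I$ into $N$ pieces $J_1,\dots,J_N$ of equal length and summing (\ref{min-est}) gives $\PP(\exists i:\eta_{B,J_i}\geq 2)\leq C|B|^2|I|^2/N\to 0$ as $N\to\infty$. Hence, almost surely, the finite-dimensional truncation $H_B^\omega$ has only simple eigenvalues inside $I$: no two eigenvalues coincide, and no eigenvalue is degenerate, since a degenerate one would be counted at least twice by $\eta_{B,J}$ for every small interval $J$ around it. Separately, because each density $g_n$ has full support, letting $\{\omega_n\}_{n\in B}$ range over $\RR^{|B|}$ moves the eigenvalues of the finite matrix $H_B^\omega=P_BH_0P_B+\sum_{n\in B}\omega_nP_n$ across the whole line, so $H_B^\omega$ has an eigenvalue in $I$ with positive probability.

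Now suppose, for contradiction, that the spectrum of $H^\omega_{\Hi^\omega_B}$ in $\Sigma^\omega$ is not simple on an event of positive probability. Since $\Hi^\omega_B$ is $H^\omega$-invariant, at a non-simple eigenvalue $E_0\in\Sigma^\omega$ the image $E_{H^\omega}(\{E_0\})P_B\Hi$ has dimension at least two, so $H^\omega$ itself has multiplicity at least two at $E_0$. The decisive step is to feed this into Theorem \ref{mainThm}: I would show that, under the full-support assumption, the presence with positive probability of such a doubly degenerate eigenvalue forces a set $J\subset\Theta_p$ of positive Lebesgue measure on which the multiplicity of $H^\omega$ is at least two for a.e.\ $\omega$. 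Granting this, Theorem \ref{mainThm}(1) with $K=2$ gives that every eigenvalue of $H_B^\omega$ has multiplicity at least two, almost surely. Intersecting with the positive-probability event from the previous paragraph on which $H_B^\omega$ has an eigenvalue in $I$ produces a degenerate eigenvalue of $H_B^\omega$ inside $I$ with positive probability, contradicting the simplicity forced by the Minami estimate. This contradiction establishes the theorem.

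The main obstacle is exactly the decisive step. Theorem \ref{mainThm} requires as input a positive-measure set of energies carrying multiplicity at least two for a.e.\ $\omega$, whereas non-simplicity a priori supplies, for each realization, only a single degenerate eigenvalue, a Lebesgue-null set; the argument therefore cannot be purely measure-theoretic. What must be shown is that degeneracy of $H^\omega$ is never an isolated coincidence at one energy but a rigid, region-wide phenomenon, exactly the channel/symmetry mechanism behind the counterexample in the Remark, where degeneracy persists across an entire spectral interval. I would attack this with the resolvent and spectral-averaging machinery underlying Theorem \ref{mainThm}: re-randomizing the finitely many couplings $\{\omega_n\}_{n\in B}$ moves a degenerate eigenvalue continuously, and because the degeneracy reflects a structural coincidence of the eigenvector components rather than a transversal crossing, it should persist on a full-measure set of couplings and sweep out a positive-measure band of energies with multiplicity at least two. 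Turning this heuristic into the precise almost-sure, region-wide lower bound demanded by Theorem \ref{mainThm}, and reconciling the random set $\Sigma^\omega$ with the deterministic $\Theta_p$, is where essentially all the difficulty lies.
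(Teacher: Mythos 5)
Your opening step is sound and matches the paper: covering $I$ by $O(N)$ overlapping subintervals and summing the Minami estimate \eqref{min-est} gives, almost surely, simplicity of the spectrum of $H^\omega_B$ inside $I$. But the rest of your argument hinges on what you yourself call the ``decisive step'': upgrading a single degenerate eigenvalue of $H^\omega_{\Hi^\omega_B}$ (which, for each fixed $\omega$, is one point of the random set $\Sigma^\omega$ --- a Lebesgue-null set of energies) into a \emph{deterministic} set $J\subset\Theta_p$ of \emph{positive} Lebesgue measure carrying multiplicity at least two for a.e.\ $\omega$, so that Theorem \ref{mainThm} can be invoked with $K=2$. This step is not proved, and the heuristic you offer for it is circular: you posit that a degeneracy ``reflects a structural coincidence'' that ``should persist'' under re-randomization of the couplings and sweep out a band of degenerate energies. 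That persistence is exactly the kind of rigidity the theorem is designed to rule out; in the paper's Remark the band-wide degeneracy comes from an exact unitary equivalence between channels, a special structure one cannot posit in a proof by contradiction. Nothing in the paper (or in your proposal) converts an isolated degenerate eigenvalue into a positive-measure degenerate set, and you additionally have no control on whether such energies would lie in $\Theta_p$ rather than merely in the random set $\Sigma^\omega$. So the reduction you propose is to a statement at least as hard as the theorem itself, and the proof does not close.

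It is worth seeing how the paper avoids ever needing such a step: it propagates \emph{simplicity}, which is the direction that works, rather than trying to propagate degeneracy. Writing $H^\omega_B=H^{\tilde\omega}_B+wP_B$ as in \eqref{opDefEq2}, the averaged coupling $w$ has an absolutely continuous, full-support distribution, and since $P_B$ is the identity on $P_B\Hi$ it shifts the spectrum rigidly: $\sigma(H^{\tilde\omega}_B+wP_B)=w+\sigma(H^{\tilde\omega}_B)$. Simplicity in $I$ for a.e.\ $w$ therefore forces simplicity of $\sigma(H^{\tilde\omega}_B)$ on all of $\RR$ (this translation trick is the paper's replacement for your missing ``region-wide'' step). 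Next, by the representation \eqref{cor1MultEq1}, $P_B(H^\omega-z)^{-1}P_B$ converges (after inversion) to $H^\omega_B$-data as $\Im z\to\infty$, so the discriminant $\Delta^\omega(z)$ of $\det\left(P_B(H^\omega-z)^{-1}P_B-xI\right)$ is non-zero for $\Im z\gg 1$; since $\Delta^\omega$ is analytic on $\CC^{+}$, Privalov's uniqueness theorem gives $\Delta^\omega(E+\iota 0)\neq 0$ for a.e.\ $E\in\RR$, i.e.\ simplicity of the boundary Green matrices at almost every energy. Finally, the kernel bijection of Lemma \ref{lem0KerEquiRes}, $\ker_{\Hi^\omega_B}(H^{\omega,\lambda}_B-E)\cong\ker_{P_B\Hi}(I+\lambda\, G(E+\iota 0))$, together with spectral averaging over the coupling (which lets the eigenvalues avoid the exceptional null set of energies), transfers this a.e.-energy simplicity to simplicity of every eigenvalue of $H^\omega$ in $\Sigma^\omega$ restricted to $\Hi^\omega_B$. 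In short, the analytic-continuation-plus-kernel-bijection machinery is precisely the tool that substitutes for the measure-theoretic leap your proposal requires but cannot supply.
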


Another extension of Theorem \ref{simplicity} which is  obtained by combining 
Theorem \ref{mainThm} with a result of Anish-Dolai \cite[Lemma 4.1]{AD1} is:

\begin{theorem}\label{uniformsingular}
Let $H^\omega$ satisfy the conditions of Theorem \ref{mainThm}.  
Suppose for a non-trivial interval $I\subset \RR$ and $a,b>0$, the generalized Minami estimate
$$\PP(\omega: \eta_{B,J}(\omega)>K)\leq C|B|^{a}|J|^{1+b}$$
is valid for all $B\subset\Nc$ and any subinterval $J\subset I$ with $a, b >0$ independent of $B, J$.
Then $\Sigma^\omega$ has uniform multiplicity and multiplicity of $\sigma_s(H^\omega)$ is bounded above by $K$.
\end{theorem}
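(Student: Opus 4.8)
The plan is to trap the multiplicity function on $\Sigma^\omega$ between a lower bound coming from Theorem \ref{mainThm} and an upper bound of $K$ coming from the generalized Minami estimate, and then to read off uniformity from the fact that these two bounds are forced to coincide. Throughout I will use two standing measure-theoretic facts about the localized regime: that $\Sigma^\omega\subseteq\Theta_p$ up to a set of Lebesgue measure zero (a Fubini argument, using that the resolvent condition defining $\Sigma^\omega$ holds after shifting the coupling $\omega_n$ by a.e.\ amount, which by the full-support hypothesis covers a.e.\ value of $\omega_n$), and that $I\cap\Sigma^\omega$ has positive Lebesgue measure since $I$ sits in the localized region.

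First I would produce the local upper bound: for a.e.\ $\omega$ the spectral multiplicity of $H^\omega$ is at most $K$ at every energy of $I\cap\Sigma^\omega$. This is the Molchanov-type half of the argument and is precisely \cite[Lemma 4.1]{AD1}, which I would invoke rather than reprove; conceptually it runs as follows. Fix an increasing sequence of finite boxes $B_L\subset\Nc$. If some $E\in I\cap\Sigma^\omega$ were an eigenvalue of multiplicity $\geq K+1$, the decay encoded in the finiteness of the resolvent norms defining $\Sigma^\omega$ would let one restrict $K+1$ orthonormal eigenfunctions to $B_L$ and obtain $K+1$ approximately orthonormal approximate eigenvectors of $H^\omega_{B_L}$, whose eigenvalues all lie in an interval $J$ of length $\delta_L\to 0$ as $L\to\infty$; hence $\eta_{B_L,J}(\omega)>K$. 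Covering $I$ by $O(|I|/\delta_L)$ such intervals and summing the generalized Minami estimate yields a total probability of order $|I|\,|B_L|^{a}\,\delta_L^{\,b}$. Here the hypothesis $b>0$ is decisive: the surplus power $\delta_L^{\,b}$ absorbs both the covering factor $\delta_L^{-1}$ and the polynomially growing factor $|B_L|^{a}$, so the probabilities are summable in $L$ and Borel--Cantelli forces the multiplicity in $I$ down to $K$.

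Next I would globalize and sharpen this to uniformity using Theorem \ref{mainThm}. Let $m_*$ be the largest integer $j$ such that the set of energies in $\Theta_p$ at which $H^\omega$ has multiplicity at least $j$ has positive Lebesgue measure. If $m_*\geq K+1$, then Theorem \ref{mainThm}(2) would force multiplicity $\geq K+1$ throughout $\Sigma^\omega$, in particular on the positive-measure set $I\cap\Sigma^\omega$, contradicting the previous step; hence $m_*\leq K$. Conversely, applying Theorem \ref{mainThm}(2) at the level $m_*$ gives multiplicity $\geq m_*$ everywhere on $\Sigma^\omega$, while by maximality of $m_*$ the multiplicity exceeds $m_*$ only on a null subset of $\Theta_p$, hence, by the containment above, on a null subset of $\Sigma^\omega$. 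Therefore the multiplicity equals the constant $m_*\leq K$ a.e.\ on $\Sigma^\omega$; this is the asserted uniform multiplicity, and since the singular spectrum away from the continuous spectrum is carried by $\Sigma^\omega$, the multiplicity of $\sigma_s(H^\omega)$ is bounded above by $K$.

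The main obstacle is the first step: certifying that the finite-volume counting function $\eta_{B_L,J}$ genuinely detects the infinite-volume multiplicity. This demands quantitative decay of the eigenfunctions — enough to make $\delta_L$ shrink fast relative to $|B_L|^{a}$ and the covering count — extracted purely from the resolvent-norm finiteness defining $\Sigma^\omega$, and it requires that the error terms produced by truncating $H^\omega$ to $B_L$ (which is where $range(P_n)\subset\mathcal{D}(H_0)$ enters) be negligible in the limit. A secondary, more bookkeeping, difficulty is the measure-theoretic matching of $\Sigma^\omega$ and $\Theta_p$ that lets the positive-measure hypothesis of Theorem \ref{mainThm} pass between the two sets.
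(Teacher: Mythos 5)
Your overall architecture (an upper bound of $K$ from the generalized Minami estimate, a lower bound and uniformity from Theorem \ref{mainThm}) matches the paper's intent, but both steps where your proposal does actual work have genuine gaps. The central one is the upper bound. You propose a Klein--Molchanov style argument: restrict $K+1$ orthonormal eigenfunctions of the infinite-volume operator to growing boxes $B_L$, get $K+1$ approximate eigenvectors of $H^\omega_{B_L}$ in an interval of length $\delta_L$, and run a covering plus Borel--Cantelli argument against $|I|\,|B_L|^a\delta_L^b$. As you yourself flag, this needs quantitative eigenfunction decay strong enough for the truncation errors to beat both the covering factor and $|B_L|^a$; but under the hypotheses in force here (general, possibly unbounded $H_0$, arbitrary finite-rank projections, and only the finiteness of resolvent norms encoded in $\Theta_p$ and $\Sigma^\omega$ --- no exponential or SULE-type localization at all) no such decay is available, so the ``main obstacle'' you name cannot be resolved within the stated assumptions. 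The paper never truncates eigenfunctions and never uses growing boxes: it applies the Minami estimate to a single \emph{fixed} finite $B$ (covering $I$ by $N$ intervals, so the failure probability is $O\!\left(|B|^a|I|^{1+b}N^{-b}\right)\to 0$; no Borel--Cantelli over volumes), removes the restriction to $I$ by the coupling-shift trick from the proof of Theorem \ref{simplicity} (writing $H^\omega_B=H^{\tilde{\omega}}_B+wP_B$, whose spectrum translates by $w$, with $w$ having full support and $\cup_{w}(I-w)=\RR$), then converts the finite-volume bound into a bound on the root multiplicities of $\det\!\left(P_B(H^\omega-z)^{-1}P_B-xI\right)$ for a.e.\ $z\in\CC^{+}$ via the representation \eqref{cor1MultEq1} and the $\Im z\to\infty$ argument of Corollary \ref{cor1MultCutoffOp}, and finally invokes \cite[Theorem 1.1]{AD1} to pass from Green-matrix root multiplicities to the multiplicity of $\sigma_s(H^\omega)$. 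The finite-to-infinite-volume transfer is thus done by Herglotz-function machinery, which is precisely what lets the theorem dispense with localization hypotheses; your identification of this step with an eigenfunction-truncation lemma misreads what \cite{AD1} supplies.

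The second gap is your globalization/uniformity step. You define $m_*$ as the largest $j$ such that ``the set of energies in $\Theta_p$ at which $H^\omega$ has multiplicity at least $j$ has positive Lebesgue measure.'' For point spectrum this is vacuous: the eigenvalues form a countable, hence Lebesgue-null, set, so no such set ever has positive measure; likewise your later assertions (``multiplicity exceeds $m_*$ only on a null subset,'' ``multiplicity equals $m_*$ a.e.'') are automatically true and carry no information. The meaningful formulation, as in Theorem \ref{mainThm}, quantifies over eigenvalues lying in a positive-measure set $J$ (``every eigenvalue of $H^\omega$ in $J$ has multiplicity at least $K$''), and with that reading the maximality of $m_*$ yields no pointwise upper bound at energies outside $I$: the absence of a positive-measure set on which all eigenvalues have multiplicity $\geq m_*+1$ does not exclude individual eigenvalues of higher multiplicity elsewhere. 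So your argument at best bounds the multiplicity inside $I$, while the conclusion concerns all of $\Sigma^\omega$ and $\sigma_s(H^\omega)$; in the paper that globalization happens already at the finite-volume level through the spectral-shift trick, and is then propagated to infinite volume by \cite[Theorem 1.1]{AD1}, not by a measure-theoretic maximality argument.
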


 The generalized Minami estimate stated above usually shows up where $K$ is rank of projection. 
But there are special cases (such as \cite{DE}) where $K<rank(P_n)$. 
As a trivial example, consider $H_0=\sum_{n\in\ZZ} \proj{\delta_{2n}}$ and $P_n=\proj{\delta_{2n}}+\proj{\delta_{2n+1}}$ on the Hilbert space $\ell^2(\ZZ)$.
It is easy to see that the generalized Minami estimate holds (with $K=1$) for any bounded interval with large enough $L$.
The reason generalized Minami estimate is important is because it can show the absence of simple Poisson statistics for the  model.
It was shown in \cite{HK}  
that complete exponential localization and  Wegner estimate are enough
to conclude that limiting statistics in Minami theory \cite{NM} 
is always Compound Poisson.

Suppose for each $N \in \NN$, $B_k, k=1,2, \dots, m_N$, are disjoint 
regions and $I_N$ are intervals such that 
$m_N \rightarrow \infty, ~~ |I_N| \rightarrow 0, N \rightarrow \infty, ~ \cap_N I_N = \{E\}$ 
and consider the array of independent random variables, 
$$
\eta_{k, E, I_N}(\omega) = Tr(E_{H_{B_k}^\omega}(I_N)), ~~ k=1,2,\dots, m_N, ~ N=1, 2, \dots. 
$$

\begin{theorem}\label{CompoundPoisson}
Consider the operators $H^\omega$ as in theorem \ref{mainThm}.
Suppose there exists a set $J \subset \Theta_p$ of positive Lebesgue measure in which the spectrum of $H^\omega$ is pure point and has spectral multiplicity
bigger than one.  
Then for any $E\in\RR$ and any sequence of bounded interval $I_N$ as above,
if the array 
$\{\eta_{k, E, I_N}(\omega), 1 \leq k \leq m_N, N = 1, 2, 3, \dots\}$ 
is asymptotically negligible, 
then the limit points $X_{\omega, E}$ of $\sum_{k}^{m_N} \eta_{k,E,I_N}(\omega)$  are not a Poisson random variables.
\end{theorem}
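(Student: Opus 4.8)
The plan is to show that every limit point $X_{\omega,E}$ is an infinitely divisible (indeed Compound Poisson) law whose canonical (L\'evy) measure $\nu$ carries no mass at the point $1$, and that this alone excludes it from the Poisson family. First I would invoke Theorem \ref{mainThm}. The hypothesis supplies a set $J\subset\Theta_p$ of positive Lebesgue measure on which $H^\omega$ is pure point with multiplicity strictly bigger than one, so part (1) of that theorem applies with $K=2$: for \emph{every} finite $B\subset\Nc$ every eigenvalue of the finite dimensional operator $H_B^\omega=P_BH^\omega P_B$ has multiplicity at least two, for almost every $\omega$. Applying this simultaneously to the disjoint blocks $B_k$, $1\le k\le m_N$, over all $N$ fixes the exceptional set once and for all.

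Next I would convert this spectral statement into one about the counting variables. Since $H_{B_k}^\omega$ acts on the finite dimensional space $P_{B_k}\Hi$ and all its eigenvalues are at least doubly degenerate, the trace $\eta_{k,E,I_N}(\omega)=Tr(E_{H_{B_k}^\omega}(I_N))$ is the sum, over the distinct eigenvalues lying in $I_N$, of their multiplicities, so each summand is $\ge 2$. Hence $\eta_{k,E,I_N}(\omega)\in\{0,2,3,4,\dots\}$ almost surely and the value $1$ is never attained, giving
$$
\PP\big(\{\omega:\eta_{k,E,I_N}(\omega)=1\}\big)=0\qquad\text{for all }k,N.
$$
I would also note that, because the $B_k$ are disjoint and $H_{B_k}^\omega=P_{B_k}H_0P_{B_k}+\sum_{n\in B_k}\omega_nP_n$ depends only on $\{\omega_n:n\in B_k\}$, each row $\{\eta_{k,E,I_N}\}_{k=1}^{m_N}$ consists of independent random variables, so the array falls under the limit theory used in \cite{HK,NM}.

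The final step is to run the convergence theory for asymptotically negligible (null) arrays of independent nonnegative integer valued random variables. By the assumed asymptotic negligibility, any limit point $X_{\omega,E}$ of $\sum_{k=1}^{m_N}\eta_{k,E,I_N}$ is infinitely divisible, in fact Compound Poisson as in \cite{HK}, with a L\'evy measure $\nu$ on the positive integers. Along any subsequence realizing this limit point, integrality lets one test against the intervals $(j-\tfrac12,j+\tfrac12)$, whose $\nu$-boundaries vanish, so that $\nu(\{j\})=\lim_N\sum_{k=1}^{m_N}\PP(\eta_{k,E,I_N}=j)$ for each $j\ge1$. Combined with the previous step this yields $\nu(\{1\})=0$. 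Since a Poisson variable of parameter $\lambda>0$ is exactly the Compound Poisson law with L\'evy measure $\lambda\delta_1$, the vanishing of $\nu(\{1\})$ rules out every nondegenerate Poisson limit, so $X_{\omega,E}$ is not Poisson.

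I expect the main obstacle to be the clean identification of the L\'evy measure from the array, namely justifying the atomwise formula $\nu(\{j\})=\lim_N\sum_k\PP(\eta_k=j)$ through the vague-convergence machinery and making it uniform over all limit points (i.e.\ over all convergent subsequences) as well as over the single $\omega$-null set furnished by Theorem \ref{mainThm}. A secondary point that needs care is the borderline degenerate case: the argument excludes Poisson laws of \emph{positive} parameter, so one must either adopt the convention that the Poisson family is indexed by $\lambda>0$ or argue separately that the relevant limit points are nondegenerate, since the point mass at $0$ is formally Poisson of parameter $0$.
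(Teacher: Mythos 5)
Your proposal is correct and takes essentially the same route as the paper: both invoke part (1) of Theorem \ref{mainThm} to get multiplicity at least two for every finite block $H_{B_k}^\omega$, deduce $\PP(\{\omega:\eta_{k,E,I_N}(\omega)=1\})=0$ for all $k,N$, and then apply the limit theory for asymptotically negligible arrays of independent variables (the paper cites \cite[Theorem 11.2]{VJ} and notes \cite[Theorem 5.1]{HK} as an alternative) to conclude that the L\'evy measure of any limit point has no atom at $1$, which rules out Poisson. Your write-up simply makes explicit the atomwise identification of the L\'evy measure and the degenerate-parameter caveat that the paper leaves implicit.
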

\subsection*{Ideas of Proofs }
The proof of above theorems are given in next section, however we quickly 
go over the ideas involved in the proofs. 
An important part of the proof of Theorem \ref{mainThm} is to 
study $H+\lambda P$ for a finite rank projection $P$, since
the operator $H^\omega$ can be re-written as \eqref{opDefEq2}. 
Then, the proof of Theorem \ref{mainThm} is divided into four parts, with
Lemma \ref{lem1UniMultRes} and Lemma \ref{lem2StabPPSpec} 
addressing the spectrum of $H+\lambda P$ only and the
Corollary \ref{cor1MultCutoffOp} and \ref{cor2ppSpecMult} use 
the lemmas to conclude the claims of the Theorem \ref{mainThm}.

In Lemma \ref{lem1UniMultRes} we show that if the multiplicity of the 
spectrum of $H+\lambda P$ is bounded below by $K$ in some interval $I$ 
 for almost all $\lambda$, 
then the algebraic multiplicity of eigenvalues of $P(H-z)^{-1}P$ (as a linear operator on $P\Hi$) is bounded below by $K$ for $z\in\CC^{+}$.
We then rewrite  $H^\omega$ as in \eqref{opDefEq2} and use the 
representation \eqref{cor1MultEq1} of $P_B(H^\omega-z)^{-1}P_B$,
to conclude that the multiplicity of the spectrum of $H^\omega_B$ is 
bounded below by $K$. 
This is the idea behind statement (1) of Theorem \ref{mainThm}, the
details of the proof are in the Corollary \ref{cor1MultCutoffOp}.

We then concentrate on the converse, Lemma \ref{lem2StabPPSpec}, namely
if the algebraic multiplicity of roots of the operator $P(H-z)^{-1}P$ (as a linear operator on $P\Hi$) is bounded below by $K$ for $z\in\CC^{+}$,
then the multiplicity of the spectrum of $H+\lambda P$ in 
$$\hat{S}=\{E\in\RR:\lim_{\epsilon\downarrow 0}\norm{(H-E-\iota\epsilon)^{-1}P}<\infty\}$$
is bounded below by $K$.
We then use the representation \eqref{opDefEq2} for 
$H^\omega$ and Lemma \ref{lem1UniMultRes} and Lemma \ref{lem2StabPPSpec} along with 
the fact that $\cup_{B\subset\Nc}\Hi^\omega_B$ is dense subset of $\Hi$, 
to get the lower bound on the multiplicity of spectrum for 
$H^\omega$ in $\Sigma^\omega$.

The main reason for concentrating on the set $\hat{S}$ is 
because in this set, the operator $H_\lambda$ has pure point spectrum, and 
$$dim(ker(H_\lambda-E))=dim(I+\lambda P(H-E-\iota 0)^{-1}P)\qquad a.e.~ E\in\hat{S}$$
with respect to Lebesgue measure, which is shown in Lemma \ref{lem0KerEquiRes}.
Hence any bound on the multiplicity of eigenvalues of $P(H-z)^{-1}P$ translate to a bound on multiplicity of eigenvalues of $H_\lambda$ and vice-versa in the region $\hat{S}$

The proof of Theorem \ref{simplicity} follows all the above steps but 
is more concise owing to the Minami estimate, which guarantees that 
the spectrum of $H^\omega_B$ is simple over the interval $I$ almost surely.
Using the representation \eqref{cor1MultEq1} we get that the matrix $P_B(H^\omega-z)^{-1}P_B$ (as a linear operator on $P_B\Hi$) has simple spectrum for $\Im z\gg 1$.
The discriminant for a polynomial with simple roots is non-zero, so the 
discriminant for the polynomial (in $x$) $$\det(P_B(H^\omega-z)^{-1}P_B-xI),$$
which is the determinant of a Sylvester matrix whose entries are polynomial 
of matrix element of $P_B(H^\omega-z)^{-1}P_B$, is an analytic function on $\CC^{+}$.
So the eigenvalues of the matrix $P_B(H^\omega-z)^{-1}P_B$ are simple for almost all $z\in\CC^{+}$.
So following the steps of proof of Lemma \ref{lem2StabPPSpec} and Corollary \ref{cor2ppSpecMult} completes the proof of the theorem.
Similar approach also works for Theorem \ref{uniformsingular}.

The proof of Theorem \ref{simplicity} is a special case of the technique developed in Lemma \ref{lem1UniMultRes} and Lemma \ref{lem2StabPPSpec}.
But it is easier to follow and provides an insight for the steps involved in the proof of the Lemma \ref{lem1UniMultRes} and Lemma \ref{lem2StabPPSpec}.

\section{Proofs of the Theorems}
In this section we will work with a fixed finite subset $B\subset\Nc$. Let $\{n_i\}_{i=1}^{|B|}$ be an enumeration of $B$ and let $U$ be a real orthogonal matrix of the form
$$U=\matx{ \frac{1}{\sqrt{|B|}} & \frac{1}{\sqrt{|B|}} & \cdots & \frac{1}{\sqrt{|B|}} & \frac{1}{\sqrt{|B|}} \\ u_{2,1} & u_{2,2} & \cdots & u_{2,|B|-1} & u_{2,|B|} \\ \vdots & \vdots & \ddots & \vdots & \vdots\\ u_{|B|,1} & u_{|B|,2} & \cdots & u_{|B|,|B|-1} & u_{|B|,|B|} }.$$
Setting $w_i=e_i^t U\vec{\omega}$, where $\vec{\omega}=(\omega_{n_1},\cdots,\omega_{n_{|B|}})^t$ and $e_i=(0,\cdots,0,1,0,\cdots,0)^t$,
we have 
$$w_1=\frac{1}{\sqrt{|B|}}\sum_{n\in B} \omega_n.$$
For $f_1\in C_c(\RR)$ and $f_2\in C_c(\RR^{|B|-1})$, observe that
\begin{align*}
&\Ex_\omega[f_1(w_1)f_2(w_2,\cdots,w_{|B|})]\\
&=\int f_1\left(\frac{1}{\sqrt{|B|}}\sum_{n\in B} \omega_n\right)f_2\left(\sum_{i=1}^{|B|}u_{2,i}\omega_{n_i},\cdots,\sum_{i=1}^{|B|}u_{|B|,i}\omega_{n_i}\right)\prod_{i=1}^{|B|}g_{n_i}(\omega_{n_i})d\omega_{n_i}\\
&=\int \left(\int f_1(w_1)\prod_{i=1}^{|B|}g_{n_i}\left(\frac{w_1}{\sqrt{|B|g}}+\sum_{j=2}^{|B|} u_{j,i}w_j\right) dw_1\right) f_2(w_2,\cdots,w_{|B|}) \prod_{i=2}^{|B|} dw_i.
\end{align*}
Since $supp(g_{n})=\RR$ we have $\prod_{i=1}^{|B|}g_{n_i}\left(\frac{w_1}{\sqrt{|B|}}+\sum_{j=2}^{|B|} u_{j,i}w_j\right)\neq 0$ for almost all $w_1$ for almost all $w_2,\cdots,w_{|B|}$. 
Hence the conditional distribution of $w_1$ given $w_2,\cdots,w_{|B|}$ is 
absolutely continuous. 
Decomposing the operator $H^\omega$ as
\begin{align}\label{opDefEq2}
H^\omega&=H_0+\sum_{n\in\Nc} \omega_n P_n\nonumber\\
&=H_0+\sum_{n\in\Nc\setminus B}\omega_n P_n+\sum_{j=2}^{|B|} w_j \left(\sum_{i=1}^{|B|} u_{j,i}P_{n_i}\right)+\frac{w_1}{\sqrt{|B|}}\left(\sum_{n\in B}P_n\right),
\end{align}
one can view $w_1$ as a random variable with absolutely continuous distribution depending on $w_2,\cdots,w_{|B|}$.

With above observation, the result boils down to studying the multiplicity problem for single perturbation. 
We only need to work with a fixed essentially self-adjoint operator $H$ on a separable Hilbert space $\Hi$, and set
$$H_\lambda=H+\lambda P$$
for some finite rank projection $P$. Defining the closed $H$-invariant subspace
$$\Hi_P=\clsp{f(H)\phi: f\in C_c(\RR),\phi\in P\Hi},$$
and observe that it is $H_\lambda$-invariant, so using Spectral theorem we have
$$(\Hi_P,H_\lambda)\cong (L^2(\RR,PE_{H_\lambda}(\cdot)P,P\Hi),M_{Id}),$$
where $E_{H_\lambda}$ is the spectral measure associated with the operator $H_\lambda$ and $M_{Id}$ is given by
$$(M_{Id}\psi)(x)=x\psi(x)\qquad \forall x\in\RR$$
for $\psi:\RR\rightarrow P\Hi$ with compact support. Using functional calculus we have
$$P(H_\lambda-z)^{-1}P=\int \frac{1}{x-z} PE_{H_\lambda}(dx)P\qquad \forall z\in\CC\setminus\RR,$$
and using  \cite[Theorem 6.1]{GT1} we can retrieve the measure $PE_{H_\lambda}(\cdot)P$ from $P(H_\lambda-z)^{-1}P$, where we view 
$$P(H_\lambda-z)^{-1}P:P\Hi\rightarrow P\Hi\qquad z\in\CC\setminus\RR,$$
as a linear operator over a finite dimensional vector space $P\Hi$. Denote
$$G^\lambda(z)=P(H_\lambda-z)^{-1}P,~\&~G(z)=P(H-z)^{-1}P\qquad \forall z\in\CC\setminus\RR,$$
and 
$$G^\lambda(E+\iota 0)=\lim_{\epsilon\downarrow 0} G^\lambda(E+\iota \epsilon)~\&~G(E+\iota 0)=\lim_{\epsilon\downarrow 0} G(E+\iota \epsilon)$$
whenever the limit exists for $E\in\RR$. By general theory for matrix valued Herglotz functions  \cite[Theorem 6.1]{GT1}, the set
$$S:=\{E\in\RR: G(E+\iota 0) \text{ exists and finite}\}$$
has full Lebesgue measure. 

\begin{lemma}\label{lem0KerEquiRes}
On a separable Hilbert space $\Hi$ let $H$ be a self-adjoint operator and set $H_\lambda=H+\lambda P$ for $\lambda\in\RR$ and a finite rank projection $P$. Set 
$$\Hi_P=\clsp{f(H)\phi:f\in C_c(\RR), \phi\in P\Hi}$$
to be the $H$-invariant subspace containing the space $P\Hi$ and
\begin{equation*}
\hat{S}=\{E\in S: \lim_{\epsilon\downarrow 0}\norm{(H-E-\iota \epsilon)^{-1}P}\text{ exists and is finite}\}.
\end{equation*}
Then for any $E\in\hat{S}$ the linear map
$$\tilde{P}:ker_{\Hi_P}(H_\lambda-E)\rightarrow ker_{P\Hi}(I+\lambda G(E+\iota 0))$$
defined by $\tilde{P}\phi=P\phi$ is well defined and is a bijection.
\end{lemma}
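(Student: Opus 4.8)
The plan is to establish that $\tilde P$ is \emph{well-defined} (i.e.\ actually lands in $ker_{P\Hi}(I+\lambda G(E+\iota 0))$), \emph{injective}, and \emph{surjective}, which together give the bijection. The central tool is the eigen-relation: if $\phi\in ker_{\Hi_P}(H_\lambda-E)$ then $H_0$-style manipulation gives $(H-E)\phi=-\lambda P\phi$, and the hypothesis $E\in\hat S$ guarantees that the boundary resolvent $(H-E-\iota 0)^{-1}P=\lim_{\epsilon\downarrow 0}(H-E-\iota\epsilon)^{-1}P$ exists in operator norm. I will also repeatedly use the norm identity $\norm{(H-E-\iota\epsilon)^{-1}P}=\norm{P(H-E-\iota\epsilon)^{-1}}$, which follows from the spectral theorem because $|(x-E-\iota\epsilon)^{-1}|=|(x-E+\iota\epsilon)^{-1}|$ forces $\norm{(H-E-\iota\epsilon)^{-1}Pv}=\norm{(H-E+\iota\epsilon)^{-1}Pv}$ for every $v$, and taking adjoints converts the $+$ version into $\norm{P(H-E-\iota\epsilon)^{-1}}$. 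This identity is what lets the $\hat S$-bound, stated for $(H-E-\iota\epsilon)^{-1}P$, control the ``wrong-side'' resolvent $P(H-E-\iota\epsilon)^{-1}$.

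For well-definedness I would take $\phi\in ker_{\Hi_P}(H_\lambda-E)$ and rewrite the eigen-relation, after inverting $H-E-\iota\epsilon$, as
\[
\phi=-\lambda(H-E-\iota\epsilon)^{-1}P\phi-\iota\epsilon(H-E-\iota\epsilon)^{-1}\phi .
\]
Applying $P$ on the left and sending $\epsilon\downarrow 0$, the first term converges to $-\lambda G(E+\iota 0)P\phi$ since $E\in S$, while the second is bounded by $\epsilon\,\norm{P(H-E-\iota\epsilon)^{-1}}\,\norm{\phi}$, which tends to $0$ by the norm identity and the $\hat S$-bound. Hence $\big(I+\lambda G(E+\iota 0)\big)P\phi=0$, so $\tilde P\phi=P\phi\in ker_{P\Hi}(I+\lambda G(E+\iota 0))$; linearity of $\tilde P$ is immediate.

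For injectivity, suppose $P\phi=0$ with $\phi\in ker_{\Hi_P}(H_\lambda-E)$. Then $(H-E)\phi=-\lambda P\phi=0$, so $\phi$ is an eigenvector of $H$ at $E$. For any generator $f(H)\psi$ of $\Hi_P$ with $f\in C_c(\RR)$ and $\psi\in P\Hi$, $f(H)^*\phi=\overline{f(E)}\phi$ is a scalar multiple of $\phi$, and since $\psi=P\psi$ and $P\phi=0$ we get $\dprod{\phi}{\psi}=\dprod{\phi}{P\psi}=\dprod{P\phi}{\psi}=0$, whence $\dprod{\phi}{f(H)\psi}=\dprod{f(H)^*\phi}{\psi}=0$; thus $\phi\perp\Hi_P$, and as $\phi\in\Hi_P$ this forces $\phi=0$. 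For surjectivity, given $\psi\in ker_{P\Hi}(I+\lambda G(E+\iota 0))$, I set $\xi=(H-E-\iota 0)^{-1}\psi=\lim_{\epsilon\downarrow 0}(H-E-\iota\epsilon)^{-1}P\psi$ (the norm limit exists because $E\in\hat S$ and $\psi=P\psi$) and put $\phi=-\lambda\xi$. Then $P\phi=-\lambda G(E+\iota 0)\psi=\psi$ by the kernel condition, so $\tilde P\phi=\psi$; and $\phi\in\Hi_P$ because each $(H-E-\iota\epsilon)^{-1}\psi$ lies in the $H$-invariant closed subspace $\Hi_P$, which is closed under the norm limit.

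It remains to verify $\phi\in ker(H_\lambda-E)$, and I expect this to be the one genuinely delicate point. Writing $\xi_\epsilon=(H-E-\iota\epsilon)^{-1}\psi$, one has $(H-E)\xi_\epsilon=\psi+\iota\epsilon\,\xi_\epsilon\to\psi$ while $\xi_\epsilon\to\xi$; since $H$ is closed (self-adjoint), it follows that $\xi\in\mathcal{D}(H)$ and $(H-E)\xi=\psi$, hence $(H-E)\phi=-\lambda\psi$ and $(H_\lambda-E)\phi=(H-E)\phi+\lambda P\phi=-\lambda\psi+\lambda\psi=0$. The two steps requiring care are precisely this passage of a norm limit through the unbounded operator $H-E$ via closedness, and the norm identity controlling $P(H-E-\iota\epsilon)^{-1}$ in the well-definedness argument; everything else is routine bookkeeping with functional calculus and the cyclicity defining $\Hi_P$.
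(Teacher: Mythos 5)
Your proof is correct, and although its skeleton (well-definedness of $\tilde P$, injectivity, and the explicit inverse built from $(H-E-\iota 0)^{-1}$) matches the paper's, the two delicate steps are handled by genuinely different arguments. For well-definedness, the paper controls the error term by showing $\iota\epsilon(H-E-\iota\epsilon)^{-1}\to -E_H(\{E\})$ and then running a Herglotz-function argument to rule out a point mass $PE_H(\{E\})P\neq 0$ (which would contradict $E\in S$); you instead apply $P$ first and kill the error with the norm identity $\norm{P(H-E-\iota\epsilon)^{-1}}=\norm{(H-E-\iota\epsilon)^{-1}P}$, a cleaner and more elementary route to the same display. The bigger divergence is surjectivity: the paper verifies that $Q\phi=(H-E-\iota 0)^{-1}\phi$ is an eigenvector by pairing $(H-E-\iota\epsilon)\psi_\epsilon$ against a basis $f_1,\dots,f_m$ of $range(P)$, which invokes the hypothesis $range(P)\subset\mathcal{D}(H_0)$ (imported from Theorem \ref{mainThm}, not stated in the lemma) and, as written, directly yields only $P(H_\lambda-E)\psi=0$ rather than $(H_\lambda-E)\psi=0$; your closedness argument --- $\xi_\epsilon\to\xi$ and $(H-E)\xi_\epsilon=\psi+\iota\epsilon\xi_\epsilon\to\psi$, hence $\xi\in\mathcal{D}(H)$ and $(H-E)\xi=\psi$ --- gives the eigenvector property outright, needs no domain assumption on $range(P)$, and is in fact how one would repair the paper's step (pair against all of the dense set $\mathcal{D}(H)$ rather than just $range(P)$). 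The one point you assert without detail, that $E\in\hat S$ makes $(H-E-\iota\epsilon)^{-1}P\psi$ norm-convergent, is the same standard fact (monotone convergence giving $\int(x-E)^{-2}d\mu_{P\psi}<\infty$, then dominated convergence in $L^2(d\mu_{P\psi})$) that the paper also uses implicitly when it asserts $Q\phi\in\Hi_P$, so your proof is no less complete than the paper's on that point; spelling it out would make your argument fully self-contained.
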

\begin{proof}
We first note that if $\displaystyle{\lim_{\epsilon \downarrow 0} \|(H - E - i\epsilon)^{-1} P\|} $ is not finite, then there must be 
a vector in the $ker(H - E)$ since the rank of $P$ is finite, since the
limit always exists by monotone convergence theorem.  
Using the observation that $\Hi_P$ is $H_\lambda$-invariant for all $\lambda$, we will view $H$ and $H_\lambda$ as self-adjoint operators on $\Hi_P$ only.
For $E\in\hat{S}$, the map
$$\tilde{P}:ker_{\Hi_P}(H_\lambda-E)\rightarrow ker_{P\Hi}(I+\lambda G(E+\iota 0)),$$
defined by $\tilde{P}\phi=P\phi$ is well defined and linear. 
To see this let $0\neq \phi\in ker_{\Hi_P}(H_\lambda-E)$, then,  
\begin{align}\label{lem0eq1}
&(H+\lambda P-E)\phi=0\nonumber\\
\Rightarrow\qquad & \phi+\lambda (H-E-\iota\epsilon)^{-1}P\phi+\iota\epsilon (H-E-\iota\epsilon)^{-1}\phi=0\qquad\forall \epsilon>0.
\end{align}
But 
$$\lim_{\epsilon\downarrow 0}\iota\epsilon(H-E-\iota\epsilon)^{-1}=-E_{H}(\{E\}),$$
where $E_H$ is the spectral projection of $H$. Therefore
 if $PE_{H}(\{E\})P\neq 0$ then we will have 
$$G(E+\iota\epsilon)=-\frac{1}{\iota\epsilon}PE_H(\{E\})P+\tilde{G}(E+\iota\epsilon),$$
where $\tilde{G}(\cdot)$ is also a matrix-valued Herglotz function. 
(Reason:  By assumption and by polarization consider a vector $\eta$
such that $\langle \eta, ~  P E_{H}(\{E\})P \eta\rangle \neq 0$.  Then denoting
the measure $\mu = \langle \eta, ~  P E_{H}(\{E\})P \eta\rangle$, we have
by Theroem \cite[Theorem 1.3.2(1)]{DKr}, 
$$
\displaystyle{\lim_{\epsilon \rightarrow 0} \epsilon \Im(\langle \eta, ~ G(E+i\epsilon) \eta\rangle) + \mu(\{E\}) = 0. } 
$$
Since the real parts of $G$ and $\tilde{G}$ are the same, the relation follows.
)

This relation shows
that the $\lim_{\epsilon\downarrow 0}G(E+\iota\epsilon)$ does not exist.
So using the fact that $E\in S$, we get
$$\lim_{\epsilon\downarrow 0}\iota\epsilon (H-E-\iota\epsilon)^{-1}\phi=0.$$
Combining the above equation and \eqref{lem0eq1} we have
$$\lim_{\epsilon\downarrow 0} \phi+\lambda (H-E-\iota\epsilon)^{-1}P\phi=0,$$
which implies $P\phi\in ker_{P\Hi}(I+\lambda G(E+\iota 0))$. 
The map is one-one because, if $P\phi=0$ for some $0\neq \phi\in ker_{\Hi_P}(H_\lambda-E)$, then 
\begin{align*}
& (H+\lambda P-E)\phi=0~~\Rightarrow (H-E)\phi=0\\
\Rightarrow~ &\dprod{\psi}{f(H)\phi}=f(E)\dprod{\psi}{\phi}=0\qquad\forall f\in C_c(\RR),\psi\in P\Hi,
\end{align*}
which implies $\phi\perp \Hi_P$, giving a contradiction.

Now for the inverse of the map, define
$$Q:ker_{P\Hi}(I+\lambda G(E+\iota 0))\rightarrow ker_{\Hi_P}(H_\lambda-E)$$
by $Q\phi=(H-E-\iota 0)^{-1}\phi$. By the definition of $\hat{S}$ the element $Q\phi\in\Hi_P$ for any $\phi\in P\Hi$ and $E\in\hat{S}$.
Let $0\neq \psi=Q\phi$ for some $\phi\in ker_{P\Hi}(I+\lambda G(E+\iota 0))$. 
This choice leads us to :
\begin{align*}
\psi: &= Q\phi = (H-E-i0)^{-1} P\phi= (H-E-i\epsilon)^{-1} P\phi + \psi_\epsilon, \\
& ~~where~ \|\psi_\epsilon \| = o(1), \\
 (H_\lambda-E)\psi  &= (H - E - i\epsilon)\psi + \lambda P \psi + i\epsilon \psi \\
&= P\phi + (H - E - i\epsilon)\psi_\epsilon + \lambda P \psi + i\epsilon \psi \\
&= P\phi + (H - E - i\epsilon)\psi_\epsilon + \lambda G(E+i0)\phi + o(1).
\end{align*}
Therefore
\begin{equation}\label{referee}
P(H_\lambda - E)\psi = P \phi + P(H- E -i\epsilon)\psi_\epsilon + \lambda G(E+i0)\phi + o(1).
\end{equation}
Let  $range(P) = \langle f_1, \dots, f_m\rangle$,  then using the fact that  $range(P) \subset D(H_0),$ we have
$$
\langle f_j, (H - E - i\epsilon) \psi_\epsilon\rangle =
\langle (H - E + i\epsilon) f_j,  \psi_\epsilon\rangle = o(1).
$$
Therefore $P(H - E -i\epsilon)^{-1} \psi_\epsilon = o(1)$.  Substituting
equation (\ref{referee}) and using the condition that $\phi \in ker_{P\Hi}(I + \lambda G(E+i0))$, 
$$
P(H_\lambda - E)\psi = P\phi  + \lambda G(E+i0)\phi  = 0,
$$ 
hence $Q\phi\in ker_{\Hi_P}(H_\lambda-E)$. The injectivity of the map $Q$ 
follows from
\begin{align*}
P Q \phi= P(H-E-\iota 0)^{-1}\phi= G(E+\iota 0)\phi=-\frac{1}{\lambda}\phi.
\end{align*}
This completes the proof of lemma because $-\lambda Q$ is the inverse of the map $\hat{P}$.

\end{proof}

 In the following Lemma, we regard $H_\lambda$ as an operator on $\Hi_P$.

\begin{lemma}\label{lem1UniMultRes}
Let $H,H_\lambda$ and $\hat{S}$ be defined as in Lemma \ref{lem0KerEquiRes}, and let $J$ be a 
subset of $\hat{S}$ of positive Lebesgue measure. .
Suppose any eigenvalue of $H_\lambda$ in $J$ has 
multiplicity at least $K\geq 1$ for almost all $\lambda$.
Then all the roots of the polynomial (in $x$)
$$F_z(x)=\det(G(z)-xI),$$
have multiplicity bounded below by $K$, for almost all $z\in\CC^{+}$.
\end{lemma}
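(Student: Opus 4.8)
The plan is to work entirely with the finite matrix Herglotz function $G(z)=P(H-z)^{-1}P$ on $P\Hi$ and to transfer information from the boundary values $G(E+\iota 0)$, $E\in J$, into the interior $\CC^{+}$. The first observation is that on $\hat S$ the boundary value is \emph{self-adjoint}: since $E\in\hat S$ forces $\lim_{\epsilon\downarrow 0}\epsilon\,\norm{(H-E-\iota\epsilon)^{-1}P\phi}^2=0$, we get $\Im G(E+\iota 0)=0$, so $G(E+\iota 0)$ is a Hermitian matrix and its algebraic and geometric eigenvalue multiplicities coincide. By Lemma \ref{lem0KerEquiRes}, for $E\in\hat S$ the multiplicity of $E$ as an eigenvalue of $H_\lambda$ equals $\dim\ker(I+\lambda G(E+\iota 0))$, i.e.\ the multiplicity of $-1/\lambda$ as an eigenvalue of $G(E+\iota 0)$. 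Hence the hypothesis, read through the change of variable $t=-1/\lambda$ (which carries null sets to null sets), says precisely that the set
$$T=\{\,t\neq 0:\ t\in\sigma(G(E+\iota 0))\text{ with multiplicity }<K\text{ for some }E\in J\,\}$$
has Lebesgue measure zero.

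Next I would establish monotonicity, which will be used to produce sets of positive measure. Writing $M(\cdot)=PE_H(\cdot)P$ for the matrix spectral measure, one has $\dprod{\phi}{\tfrac{d}{dE}G(E+\iota\epsilon)\phi}=\int (t-E-\iota\epsilon)^{-2}\,d\mu_\phi(t)$ with $\mu_\phi=\dprod{\phi}{M(\cdot)\phi}\ge 0$; on $\hat S$ the integral $\int(t-E)^{-2}\,d\mu_\phi<\infty$, so in the limit $\tfrac{d}{dE}G(E+\iota 0)$ is a positive-definite Hermitian matrix. Consequently the ordered eigenvalues of $G(E+\iota 0)$ are locally strictly increasing in $E$ with a positive lower bound on the derivative, and therefore each eigenvalue branch maps positive-measure subsets of $\hat S$ onto sets of positive measure.

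The heart of the argument is to pin down the generic eigenvalue structure in $\CC^{+}$ and to match it with the boundary. Since $G$ is analytic on the connected domain $\CC^{+}$ and $\det G(z)\not\equiv 0$ (indeed $G(z)\sim -z^{-1}I$ as $\Im z\to\infty$), the number $r_0$ of distinct nonzero eigenvalues is maximal off a discrete set, and by connectedness the multiplicities $(k_1,\dots,k_{r_0})$, with $\sum k_i=m=\operatorname{rank}P$, are constant there; the squarefree discriminant $\tilde\Delta(z)=\prod_{i<j}(\zeta_i(z)-\zeta_j(z))^2$ extends (by removable singularities) to an analytic, not identically zero, function on $\CC^{+}$ lying in the Nevanlinna class. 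By Privalov's uniqueness theorem its boundary value is nonzero a.e., so for a.e.\ $E\in J$ the distinct interior branches do not collide at the boundary, and $G(E+\iota 0)$ has eigenvalue multiplicities exactly $(k_1,\dots,k_{r_0})$. Now if some $k_{i_0}<K$, then for a.e.\ $E\in J$ the branch $\zeta_{i_0}(E+\iota 0)$ is a nonzero eigenvalue of multiplicity $<K$, hence lies in $T$; but by the monotonicity step its image over $J$ has positive measure, contradicting $\operatorname{meas}(T)=0$. Therefore every $k_i\ge K$, which is exactly the assertion that all roots of $F_z(x)=\det(G(z)-xI)$ have multiplicity at least $K$ for a.e.\ $z\in\CC^{+}$.

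The main obstacle I expect is the interior/boundary matching in the last paragraph: as $\epsilon\downarrow 0$ the algebraic multiplicities can only \emph{merge}, so knowing that boundary multiplicities are large is the wrong direction, and one genuinely needs the non-collision of distinct branches for a.e.\ $E$. Making this rigorous requires that the squarefree discriminant really be a single-valued analytic function of Nevanlinna class on all of $\CC^{+}$ (across the collision locus) and that its nontangential boundary values agree with the discriminant of the boundary eigenvalues; this, together with the careful justification of the term-by-term differentiation giving positive-definiteness of $\tfrac{d}{dE}G(E+\iota 0)$ on $\hat S$, is where the technical work concentrates.
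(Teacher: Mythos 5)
Your proposal takes a genuinely different route from the paper, and most of its architecture (the translation of the hypothesis into the null set $T$ via Lemma \ref{lem0KerEquiRes}, the Kato-type constancy of the interior multiplicity pattern, and the Privalov argument for the square-free discriminant) is sound. But there is a genuine gap at the step you call monotonicity, and your final contradiction rests on it. The set $\hat S$ is only a Borel set of energies; in exactly the situations this lemma is designed for (e.g.\ $H$ with dense pure point spectrum in $J$, as in the Anderson-type applications), $\hat S$ contains no interval, so ``$\frac{d}{dE}G(E+\iota 0)$'' has no classical meaning on it, and --- more decisively --- the increment of the boundary value between two points $E_1<E_2$ of $\hat S$ is governed by the spectral mass of $H$ \emph{inside} $(E_1,E_2)$, not by limits of derivatives at the endpoints. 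Concretely, if $\mu_\phi=\dprod{\phi}{PE_H(\cdot)P\phi}$ has an atom of mass $m$ at distance $\delta$ from both $E_1$ and $E_2$, that atom contributes about $-2m/\delta$ to $\dprod{\phi}{\left[G(E_2+\iota 0)-G(E_1+\iota 0)\right]\phi}$: boundary values of Herglotz functions jump \emph{down} across spectrum of $H$, so the ordered eigenvalues of $G(\cdot+\iota 0)$ are not increasing between points of $\hat S$, no matter how close those points are. Hence ``positive-definite derivative at each point of $\hat S$'' does not yield ``eigenvalue branches map positive-measure subsets of $\hat S$ onto positive-measure sets'', and without that property nothing prevents the low-multiplicity branch from sending all of $J$ into the single null set $T$, which is precisely the scenario you must exclude.

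The property you need is in fact true, but it is a spectral-averaging statement, not a calculus statement: it says that eigenvalue branches of $G(\cdot+\iota 0)$ cannot carry a set of positive Lebesgue measure of energies into a Lebesgue-null set of values (equivalently, of coupling constants $\lambda=-1/t$). Its proof goes through averaging over $\lambda$ (Javrjan--Kotani/Krishna--Stollmann, which the paper itself invokes in Lemma \ref{lem2StabPPSpec}, or, function-theoretically, L\"owner/Aleksandrov--Clark type theorems for Herglotz functions), and in the matrix-valued setting isolating a single branch requires additional care because branches can permute around the exceptional points. By contrast, the paper's proof avoids your interior-to-boundary matching altogether: it encodes the statement ``every root of $F_E$ has multiplicity at least $K$'' as the vanishing of the remainder of $F_z$ upon division by $(\tilde F_z)^K$, whose coefficients have numerators that are polynomial expressions in the coefficients $a_k(z)$ and hence analytic on $\CC^{+}$; vanishing a.e.\ on $J$ plus Privalov then forces the divisibility at every $z\in\CC^{+}$. (The paper, too, passes tersely over the a.e.-$\lambda$ to a.e.-$E$ bridge, so you have correctly identified the real subtlety of this lemma --- but the mechanism you propose for it is the step that fails and must be replaced by spectral averaging.)
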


\begin{remark}
Note that since $\sigma_c(H)\neq \RR$, the measure $tr(PE_H(\cdot))$ is not equivalent to Lebesgue measure, 
so using F. and M. Riesz theorem \cite[Theorem 2.2]{JL2}, we get that $tr(G(z))\neq 0$ for $z\in\CC^{+}$.

Note that even if $\sigma(H)\cap J=\phi$, the proof of Lemma \ref{lem0KerEquiRes} shows that for
$\lambda$ in $\cup_{E\in J}\{-\frac{1}{x}:x\in\sigma( G(E+\iota 0))\}$
we have 
$$\sigma(H_\lambda)\cap J\neq \phi.$$
So the hypothesis is not vacuous.

\end{remark}

\begin{proof}
Since the subset $J$ is contained inside $\hat{S}$, we can apply Lemma \ref{lem0KerEquiRes} and claim
$$dim(ker_{\Hi_P}(H_\lambda-E))=dim(ker_{P\Hi}(I+\lambda G(E+\iota 0)))\qquad\forall E\in J.$$

Now using above observation for any $E\in J$ and the hypothesis of the lemma, we get that the geometric multiplicity of spectrum for $G(E+\iota 0)$ is at least $K$ for almost all $E$ in $J$. 
In algebraic terms this implies that all the roots of the polynomial (in $x$)
$$F_E(x)=\det(G(E+\iota 0)-xI)$$
has multiplicity bounded below by $K$ for almost all $E$ in $J$. 

Now for $z\in\CC^{+}$, consider the polynomial (let $N=rank(P)$)
$$F_z(x)=\det(G(z)-xI)=\sum_{i=0}^N a_i(z)x^i,$$
and define
$$G_z(x)=gcd\left(F_z(x),\frac{d F_z}{dx}(x)\right)=\sum_{i}p_i(z)x^i,$$
where following Euclid's algorithm we get that $p_i$ are rational polynomials of $\{a_j\}_j$. Now consider 
$$\tilde{F}_z(x)=\frac{F_z(x)}{G_z(x)}=\sum_i q_i(z)x^i,$$
where $q_i$ are rational polynomial of $\{a_j\}_j$ and $\{p_j\}_j$. Since $p_j$ are rational polynomials of $\{a_k\}_k$, we can view $q_i$ to be rational polynomial of $\{a_k\}_k$ only.

First notice that each root of $F_z$ is a root of $\tilde{F}_z$, and each root of $\tilde{F}_z$ has multiplicity one. 
So if all the roots of the polynomial $F_z(x)$ has multiplicity at least $K$, then $(\tilde{F}_z(x))^K$ divides $F_z(x)$ as a polynomial of $x$.
So define the reminder 
$$\mathcal{R}_z(x)=reminder(F_z(x),(\tilde{F}_z(x))^K)=\sum_i r_i(z)x^i,$$
which following division algorithm tells us that $r_i$ are rational polynomial of $\{a_j\}_j$ and $\{q_j\}_j$. 
Since $q_j$ are rational polynomial of $\{a_k\}_k$, we can view $r_i$ as rational polynomial of $\{a_k\}_k$ only. 
We are only interested in numerator of $r_i$ (as a rational polynomial in $\{a_k\}_k$) which will be denoted by $\tilde{r}_i$. 
Note that $\tilde{r}_i$ are defined for $z\in\CC^{+}$ (because they are polynomial of $\{a_k\}_k$ which are defined for $z\in\CC^{+}$).
Now using the fact that all the roots of $F_E(x)$ has multiplicity bounded below by $K$, we have $\tilde{r}_i(E+\iota 0)=0$ for almost all $E\in J$ for all $i$. 
Since $J$ has non-zero Lebesgue measure, using the Privalov Uniqueness
Theorem \cite[page 552]{EM}, we conclude that $r_i\equiv 0, ~~ \forall ~~ i$ for $z\in\CC^{+}$, 
which means $$\mathcal{R}_z(x)=0\qquad  ~~ \forall z\in\CC^{+}.$$
Hence all the roots of $F_z(x)$ have multiplicity bounded below by $K$.

\end{proof}

With the lemma in place, part (1) of Theorem \ref{mainThm} boils down to writing the resolvent $G(z)$ in a certain way and taking the limit $\Im z\rightarrow \infty$. 
This is done in the following corollary to prove the result.

\begin{corollary}\label{cor1MultCutoffOp}
Let $H^\omega$ be defined by \eqref{OpDefEq1} and $J$ satisfies the hypothesis of theorem \ref{mainThm}. 
For any finite subset $B\subset\Nc$, if $range(P_B)\subset\mathcal{D}(H_0)$ then the multiplicity of any eigenvalue of the operator
$$P_B H^\omega P_{B}:P_B\Hi\rightarrow P_B\Hi$$
is bounded below by $K$ almost surely.
\end{corollary}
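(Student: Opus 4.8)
The plan is to use the decomposition \eqref{opDefEq2} to reduce to the single-perturbation situation $H^\omega = H + \lambda P$, where $P := P_B = \sum_{n\in B}P_n$ is a finite rank projection, $\lambda = w_1/\sqrt{|B|}$, and $H$ collects all the remaining ($w_1$-independent) terms. By the conditioning computation preceding \eqref{opDefEq2}, for almost every fixing of $\{w_j\}_{j\geq 2}$ and $\{\omega_n\}_{n\notin B}$ the variable $\lambda$ has an absolutely continuous conditional law with full support, so by Fubini the hypothesis of the corollary transfers to the statement: for a.e.\ base realization, every eigenvalue of $H_\lambda$ lying in $J$ has multiplicity at least $K$ for a.e.\ $\lambda$. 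First I would record that $J\subset\Theta_p$ places $J$ inside the set $\hat S$ attached to the base operator $H$ for a.e.\ base realization, which is exactly what is required to invoke Lemma \ref{lem1UniMultRes}.

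Second, I would apply Lemma \ref{lem1UniMultRes} to conclude that all roots of $F_z(x)=\det(G(z)-xI)$, with $G(z)=P(H-z)^{-1}P$, have multiplicity at least $K$. Its proof in fact yields $\mathcal R_z\equiv 0$ on all of $\CC^+$, so this holds for \emph{every} $z\in\CC^+$, not merely for almost every one; this matters because I will let $z$ run to $i\infty$ along the imaginary axis, which is a Lebesgue-null subset of $\CC^+$.

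Third, the representation step alluded to in the introduction: extract $PHP$ from $G(z)$ by a large-$\Im z$ expansion. Using $range(P)\subset\mathcal D(H_0)\subset\mathcal D(H)$ together with the spectral theorem and dominated convergence, one obtains entrywise on $P\Hi$
$$G(z)=-\tfrac1z P-\tfrac1{z^2}PHP+o(z^{-2}),\qquad z=it,\ t\to\infty,$$
where the domain hypothesis is precisely what guarantees the first moment $\langle f,Hg\rangle$ is finite, so that the $z^{-2}$ coefficient equals $PHP$ exactly and the remainder is genuinely $o(z^{-2})$. Equivalently $R(z):=-z^2G(z)-zP$ is analytic on $\CC^+$ and $R(it)\to PHP=:A$. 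Since $R(z)$ and $G(z)$ are related by the affine spectral change $y=-z^2x-z$, which preserves algebraic root multiplicities, every eigenvalue of $R(z)$ has algebraic multiplicity at least $K$ for every $z\in\CC^+$.

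Finally I would pass to the limit. The characteristic polynomials $\chi_{R(it)}$ converge coefficientwise to $\chi_A$, so by continuity of polynomial roots (Hurwitz/Rouch\'e), for each distinct eigenvalue $\alpha_i$ of $A$ with multiplicity $m_i$ a small disc $D_i$ around $\alpha_i$ contains, for $t$ large, exactly $m_i$ roots of $\chi_{R(it)}$ counted with multiplicity; since $D_i$ contains at least one such root and every root has multiplicity at least $K$, we get $m_i\geq K$. Hence $A=PHP$ has all eigenvalues of multiplicity at least $K$, and because $P_BH^\omega P_B=PHP+\lambda I$ on $P_B\Hi$ differs from $A$ only by a scalar shift, its eigenvalue multiplicities coincide with those of $A$, which proves the corollary. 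The main obstacle is exactly this last passage to the limit: the property ``all roots have multiplicity at least $K$'' is not obviously stable under coefficientwise limits, and it is only the combination of the exact cluster count $m_i$ with the uniform lower bound $K$ on each individual eigenvalue that forces $m_i\geq K$. A secondary technical point is justifying the asymptotic order of the expansion of $G(z)$ from only the first-moment (domain) assumption.
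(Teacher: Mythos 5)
There is a genuine gap at the very first step of your reduction: you anchor Lemma \ref{lem1UniMultRes} at the $w_1$-independent base operator $H$ (so that $H^\omega = H+\lambda P_B$), and to invoke that lemma you must verify its standing hypothesis $J\subset\hat S$ \emph{for that base operator}, i.e. $\lim_{\epsilon\downarrow 0}\norm{(H-E-\iota\epsilon)^{-1}P_B}<\infty$ for a.e.\ $E\in J$, for a.e.\ base realization. Your claim that this follows from $J\subset\Theta_p$ does not hold up. $\Theta_p$ is an almost-sure statement about the operators $H^\omega$, but the base operator is not an almost-sure realization: its coupling constants on $B$ are $\omega_{n_i}-\frac{1}{|B|}\sum_{j\in B}\omega_{n_j}$, which lie on a fixed hyperplane of $\RR^{|B|}$ --- a null event for the absolutely continuous law of $(\omega_{n_i})_i$ --- so the a.s.\ statement defining $\Theta_p$ gives no information whatsoever about $H$. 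Nor can you pass from ``finite for a.e.\ coupling'' to ``finite at the coupling $\lambda=0$'': finiteness of $\lim_{\epsilon\downarrow 0}\norm{(H+\lambda P_B-E-\iota\epsilon)^{-1}P_B}$ for a.e.\ $\lambda$ never implies finiteness at a prescribed value of $\lambda$ (the implication goes only the other way, from one point to almost every point, which is exactly why the paper builds the a.e.-$\lambda$ quantifier into the definition of $\Sigma^\omega$). Nothing in the hypotheses rules out the base operator having, say, singular continuous spectrum in $J$, in which case your step fails.

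The gap is local and repairable, and the repair brings you close to the paper: apply Lemma \ref{lem1UniMultRes} with $H^\omega$ itself as the unperturbed operator and $H^{\omega,\lambda}=H^\omega+\lambda P_B$ as the perturbed family. The multiplicity hypothesis transfers by your own Fubini/translation argument (adding $\lambda P_B$ merely shifts $w_1$, whose conditional law is absolutely continuous), and $J\subset\Theta_p$ plus Fubini gives precisely $\norm{(H^\omega-E-\iota 0)^{-1}P_B}<\infty$ for a.e.\ $E\in J$ almost surely, which is the required $\hat S$ condition --- this is exactly how the paper proceeds. With that fix, your remaining steps are correct and constitute a legitimate variant of the paper's argument: where the paper uses the exact resolvent identity \eqref{cor1MultEq1}, which exhibits $P_B(H^\omega-z)^{-1}P_B$ as $\left[P_BH^\omega P_B-zP_B-C(z)\right]^{-1}$ with $\norm{C(z)}=O(1/\Im z)$, you instead use the expansion $P_B(H^\omega-z)^{-1}P_B=-z^{-1}P_B-z^{-2}P_BH^\omega P_B+o(z^{-2})$ (justified since $range(P_B)\subset\mathcal{D}(H_0)$ forces $range(P_B)\subset\mathcal{D}(H^\omega)$, the potential acting on it through finitely many terms) together with a multiplicity-preserving affine rescaling; both proofs then close with the same eigenvalue-cluster continuity argument, which in the paper is the $\epsilon/3$-ball computation. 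Your concern about ``a.e.\ $z$'' versus the null imaginary axis is also dispatched without strengthening the lemma: take any sequence $z_k$ in the full-measure set with $\Im z_k\to\infty$, since the expansion holds along every such sequence.
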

\begin{proof}
Using the fact that $H^\omega$ can be written as \eqref{opDefEq2}, defining 
$$H^{\omega, \lambda}=H^\omega+\lambda P_B,$$
and by the definition of $\Theta_p$, we have 
$$\norm{(H^\omega-E-\iota 0)^{-1}P_B}<\infty\qquad a.e ~E\in J$$
for almost all $\omega$.
Hence we can use the lemma \ref{lem1UniMultRes} and get that the roots of the polynomial (in $x$)
$$F_z^\omega(x)=\det(P_B (H^\omega-z)^{-1}P_B-xI)$$
has multiplicity bounded below by $K$, for almost all $\omega$ and $z\in\CC^{+}$. 

Now using the resolvent equation for $H^\omega$ and $\tilde{H}^\omega$, where 
$$\tilde{H}^\omega=P_{B}H^\omega P_{B}+(I-P_{B})H^\omega (I-P_{B}),$$
we can write (viewed as an operator on $P_B\Hi$)
\begin{align}\label{cor1MultEq1}
& P_B (H^\omega-z)^{-1}P_B\nonumber\\
&=\left[P_{B}H^\omega P_{B}- zP_{B}-P_{B}H_0(I-P_{B})(\tilde{H}^\omega-z)^{-1}(I-P_{B})H_0P_{B}\right]^{-1}.
\end{align}
(To get this relation we write $H^\omega = \tilde{H}^\omega + K$, and using resolvent equation twice we have
\begin{align*}
(H^\omega - z)^{-1} & = (\tilde{H}^\omega-z)^{-1} + (\tilde{H}^\omega-z)^{-1} K (\tilde{H}^\omega-z)^{-1} \\ 
&\qquad + (H^\omega - z)^{-1}K (\tilde{H}^\omega-z)^{-1} K (\tilde{H}^\omega-z)^{-1} .
\end{align*}
Then take the last term to the left to get
$$
(H^\omega - z)^{-1} \left[ I - K (\tilde{H}^\omega-z)^{-1} K (\tilde{H}^\omega-z)^{-1}\right] = \left[I + (\tilde{H}^\omega-z)^{-1} K\right] (\tilde{H}^\omega-z)^{-1}.
$$
Since action of $\tilde{H}^\omega$ preserves $P_B\Hi$ and $(P_B\Hi)^\perp$ and $K=P_BH_0(I-P_B)+(I-P_B)H_0P_B$, 
once we multiply $P_B$ from left and right in above expression, we are left with
$$G(z)\left[ P_B - P_BK (\tilde{H}^\omega-z)^{-1} K (\tilde{H}^\omega-z)^{-1}P_B\right]=P_B(\tilde{H}^\omega-z)^{-1}P_B.$$
We get the expression \eqref{cor1MultEq1} by using above expression and the fact that $P_B(\tilde{H}^\omega-z)^{-1}P_B$ is same as $(P_BH^\omega P_B-z)^{-1}$.)   

So we conclude that the algebraic multiplicity of eigenvalues of the matrix
$$P_{B}H^\omega P_{B}-P_{B}H_0(I-P_{B})(\tilde{H}^\omega-z)^{-1}(I-P_{B})H_0P_{B}$$
is at least $K$ for almost all $\omega$ and $z\in\CC^{+}$. Using the fact that $(I-P_{B})H_0P_{B}$ and $P_{B}H_0(I-P_{B})$ are finite rank operator hence bounded and 
$$\norm{(\tilde{H}^\omega-z)^{-1}}\leq \frac{1}{\Im z}\qquad \forall z\in\CC^{+},$$
there exists $C_{\omega,B}$ such that
$$\norm{P_{B}H_0(I-P_{B})(\tilde{H}^\omega-z)^{-1}(I-P_{B})H_0P_{B}}<\frac{C_{\omega,B}}{\Im z}.$$
Denoting $D=P_{B}H^\omega P_{B}$ and $C(z)=P_{B}H_0(I-P_{B})(\tilde{H}^\omega-z)^{-1}(I-P_{B})H_0P_{B}$, 
we have the multiplicity of each root of the polynomial (in $x$)
$$\det(D+C(z)-xI)$$
is bounded below by $K$ for almost all $z\in\CC^{+}$. Set
$$\epsilon=\min\{|E_1-E_2|: E_1,E_2\in\sigma(D)~\&~E_1\neq E_2\},$$
then for $\Im z>\frac{3C_{\omega,B}}{\epsilon}$, we have
$$\norm{(D+C(z)-E)\phi}=\norm{C(z)\phi}<\frac{\epsilon}{3}\norm{\phi},$$
where $E\in\sigma(D)$ and $\phi\in P_B \Hi$ be the corresponding eigenvector,  so we conclude that $D+C(z)$ has an eigenvalue in the ball $\{w\in\CC: |w-E|<\epsilon/3\}$. 
On other hand for any eigenvalue $E^z$ of $D+C(z)$ for $\Im z>\frac{3C_{\omega,B}}{\epsilon}$, let $\phi_z$ be the corresponding eigenvector for $E^z$, then
$$\norm{(D-E^z)\phi_z}=\norm{(D+C(z)-E^z)\phi_z-C(z)\phi_z}\leq \frac{\epsilon}{3}\norm{\phi_z},$$
so there is a unique eigenvalue of $D$ in the ball $\{e\in\CC: |e-E^z|<\frac{\epsilon}{3} \}$.

Let $\{E^z_i\}_i$ be an enumeration of the eigenvalues of $D+C(z)$ for $\Im z>\frac{3C_{\omega,B}}{\epsilon}$, then
$$\det(D+C(z)-xI)=\prod_i (E^z_i-x)^{n^z_i}$$
where $n^z_i$ is the algebraic multiplicity of the eigenvalue $E^z_i$. 
Since all the roots of the polynomial $\det(D+C(z)-xI)$ has multiplicity bounded below by $K$, we have $n_i^z\geq K$.
Using the convergence of $E^z_i$ to an eigenvalue of $D$ as $\Im z\rightarrow\infty$ and 
$$\det(D+C(z)-xI)\xrightarrow{\Im z\rightarrow\infty} \det(D-xI)$$
we get that that all the eigenvalues of $D$ have algebraic multiplicity bounded below by $K$. 
Since $D$ is self-adjoint, we have the equality between algebraic and geometric multiplicity, hence proving the corollary.

\end{proof}

For the second part of theorem \ref{mainThm}, we first need to obtain the claim for $H^\omega$-invariant subspaces $\Hi^\omega_P$.
This is done in the following lemma.
\begin{lemma}\label{lem2StabPPSpec}
Let $H,H_\lambda$ be defined as in Lemma \ref{lem0KerEquiRes}.
Suppose that the roots of the polynomial (in $x$)
$$\det(G(z)-xI)$$
have multiplicity at least $K$ for almost all $z\in\CC^{+}$. 
Then the multiplicity of all the eigenvalues in $\hat{S}$ of $H_\lambda$ restricted to $\Hi_P$ is at least $K$ for almost all $\lambda$ with respect to Lebesgue measure.
\end{lemma}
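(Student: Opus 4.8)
The plan is to run the argument of Lemma \ref{lem0KerEquiRes} in reverse: that lemma already gives, for every fixed $\lambda$ and every $E\in\hat S$, a bijection between $ker_{\Hi_P}(H_\lambda-E)$ and $ker_{P\Hi}(I+\lambda G(E+\iota 0))$, hence
$$\dim ker_{\Hi_P}(H_\lambda-E)=\dim ker_{P\Hi}(I+\lambda G(E+\iota 0))\qquad\forall E\in\hat S.$$
So the entire task reduces to showing that, for a.e. $\lambda$, the matrix $I+\lambda G(E+\iota 0)$ has a kernel of dimension at least $K$ for a.e. $E\in\hat S$ at which $H_\lambda$ actually has an eigenvalue; equivalently, that whenever $-1/\lambda$ is an eigenvalue of $G(E+\iota 0)$, its geometric multiplicity is at least $K$. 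Since $G(E+\iota 0)$ is (a boundary value of) a matrix-valued Herglotz function, at points of $\hat S$ it is self-adjoint, so geometric and algebraic multiplicity of its eigenvalues coincide; thus it suffices to control the algebraic multiplicities of the eigenvalues of $G(E+\iota 0)$.

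First I would transfer the multiplicity hypothesis from the interior $\CC^+$ to the boundary. By assumption all roots of $F_z(x)=\det(G(z)-xI)$ have multiplicity $\ge K$ for a.e. $z\in\CC^+$. I would argue that this forces the same lower bound on the multiplicity of the roots of $F_{E+\iota 0}(x)=\det(G(E+\iota 0)-xI)$ for a.e. $E\in\hat S$. Concretely, reuse the $gcd$/remainder machinery of Lemma \ref{lem1UniMultRes}: the coefficients $a_i(z)$ of $F_z$ are analytic on $\CC^+$ and have finite boundary values on $\hat S$, and the remainder polynomials $\tilde r_i(z)$ vanish identically on $\CC^+$ (this is exactly what "all roots have multiplicity $\ge K$" means). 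Since $\tilde r_i\equiv 0$ on $\CC^+$ and the $a_i$ converge to $a_i(E+\iota 0)$ for $E\in\hat S=S\cap\{\lim\norm{(H-E-\iota 0)^{-1}P}<\infty\}$, the identity $\tilde r_i(E+\iota 0)=0$ persists, whence $(\tilde F_{E+\iota 0})^K$ divides $F_{E+\iota 0}$ and every root of $F_{E+\iota 0}$ has multiplicity $\ge K$ for a.e. $E\in\hat S$.

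Next comes the Fubini/parametrization step, which I expect to be the main obstacle. I want to conclude that for a.e. $\lambda$, at a.e. eigenvalue $E$ of $H_\lambda$ in $\hat S$ the value $-1/\lambda$ is a root of $F_{E+\iota 0}$ and therefore has multiplicity $\ge K$ as an eigenvalue of $G(E+\iota 0)$; via the boundary self-adjointness this is geometric multiplicity, and via the bijection of Lemma \ref{lem0KerEquiRes} it equals $\dim ker_{\Hi_P}(H_\lambda-E)$. The delicate point is matching "almost every $E$" (in the $E$-variable, where the root-multiplicity statement lives) with "almost every $\lambda$" (where the conclusion lives): one must rule out that the exceptional $E$-null set gets hit by positive-measure sets of spectral parameters $\lambda$. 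The clean way is a change of variables $E\leftrightarrow\lambda$ along the relation $-1/\lambda\in\sigma(G(E+\iota 0))$, using that the spectral measure $PE_{H_\lambda}(\cdot)P$ is, after integrating in $\lambda$, absolutely continuous in $E$ (the spectral-averaging / Kotani-type fact underlying $\hat S$); since $\lambda\mapsto -1/\lambda$ is a diffeomorphism off $0$, an $E$-null set pulls back to a $\lambda$-null set for a.e. $E$ and Fubini closes the loop.

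Assembling these: for a.e. $\lambda$, every eigenvalue $E\in\hat S$ of $H_\lambda|_{\Hi_P}$ satisfies
$$\dim ker_{\Hi_P}(H_\lambda-E)=\dim ker_{P\Hi}(I+\lambda G(E+\iota 0))\ge K,$$
which is precisely the assertion. The first and third steps are the substantive ones; the boundary-value Herglotz facts (self-adjointness of $G(E+\iota 0)$ on $\hat S$, existence of limits, Privalov) are all already available from the setup preceding Lemma \ref{lem0KerEquiRes}.
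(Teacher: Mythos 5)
Your proposal is correct and follows essentially the same route as the paper's proof: transfer the multiplicity bound from $\CC^{+}$ to the boundary values $G(E+\iota 0)$ via the remainder polynomials $\mathcal{R}_z$ of Lemma \ref{lem1UniMultRes} (analyticity forces them to vanish identically, hence a.e.\ on the boundary), combine the kernel bijection of Lemma \ref{lem0KerEquiRes} with the self-adjointness of $G(E+\iota 0)$ on $\hat{S}$, and dispose of the exceptional Lebesgue-null set of energies by spectral averaging, which is exactly the paper's step $PE_{H_\lambda}(\hat{S}\setminus\tilde{S})P=0$ for a.e.\ $\lambda$ (citing Krishna--Stollmann \cite{KrS}). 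The only cosmetic differences are that the paper proves the self-adjointness of $G(E+\iota 0)$ on $\hat{S}$ inside the lemma rather than taking it as given, and states the null-set argument directly through spectral averaging without your change-of-variables gloss.
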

\begin{proof}
Following the steps from the proof of Lemma \ref{lem1UniMultRes}, we can construct the function $\mathcal{R}_z(x)$ from the polynomial (in $x$)
$$F_z(x)=\det(G(z)-xI)$$
and conclude that the algebraic multiplicity of the eigenvalues for $G(E+\iota 0)$ is bounded below by $K$ for almost all $E\in\RR$ with respect to Lebesgue measure. 
Hence the set
\begin{align*}
\tilde{S}=\{E\in S: \text{algebraic multiplicity of any eigenvalue of}\\
G(E+\iota 0) \text{ is bounded below by }K\}
\end{align*}
satisfies $Leb( S\setminus \tilde{S}) = 0.$

Next using Lemma \ref{lem0KerEquiRes} we have
$$dim(ker_{\Hi_P}(H_\lambda-E))=dim(ker_{P\Hi}(I+\lambda G(E+\iota 0)))\qquad\forall E\in \hat{S}.$$
Using the equation 
\begin{align*}
\dprod{\psi}{\Im P(H-E-\iota\epsilon)^{-1}P\psi}&= \epsilon \norm{(H-E-\iota\epsilon)^{-1}\psi}^2\qquad\forall \psi\in P\Hi,
\end{align*}
and the fact that $\Im G(E+\iota0)\geq 0$, we conclude that 
$$\Im G(E+\iota 0)=0\qquad\forall E\in\hat{S},$$
so $G(E+\iota 0)$ is a self-adjoint matrix, hence the geometric and 
algebraic multiplicity of its eigenvalues coincide.
So the lemma is concluded by the fact that on the $\hat{S}\cap \tilde{S}$ the multiplicity of an eigenvalues of $H_\lambda$ is at least $K$ and using Spectral Averaging, (whereby if $A, B$ are self-adjoint operators on $\Hi$
and $M_\lambda$ the operator of multiplication by $\lambda$ in $L^2(\RR)$, then 
the spectral measures of $A\otimes I + M_\lambda \otimes B$ on $L^2(\RR)\otimes \Hi$,
associated with vectors in the range of $B$ are always absolutely
continuous for positive bounded operators $B$, see for example
Krishna-Stollmann \cite{KrS}, from which
it follows that),  we have
$$PE_{H_\lambda}(\hat{S}\setminus \tilde{S})P=0~~a.e~\lambda,$$
so the multiplicity eigenvalues of $H_\lambda$ in $\hat{S}$ is bounded below by $K$ for almost all $\lambda$.

\end{proof}

Now the proof of second part of Theorem \ref{mainThm} is a consequence of lemma \ref{lem1UniMultRes} and \ref{lem2StabPPSpec} along with a density argument.

\begin{corollary}\label{cor2ppSpecMult}
Let $H^\omega$ be defined as \eqref{OpDefEq1} and assume that it satisfies 
the hypothesis of the Theorem \ref{mainThm} on the set $J\subset \Theta_p$ with the
lower bound on the multiplicity given by $K$. 
Then the multiplicity of any eigenvalue in $\Sigma^\omega$ for the operator $H^\omega$ is bounded below by $K$ for almost 
all $\omega$.
\end{corollary}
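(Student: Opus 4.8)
The plan is to reduce the statement, exactly as in the preceding corollaries, to the single–perturbation picture via the decomposition \eqref{opDefEq2}, then to combine Lemma \ref{lem1UniMultRes} and Lemma \ref{lem2StabPPSpec} to get the lower bound $K$ for each restriction $H^\omega|_{\Hi^\omega_B}$ on the full-measure set $\hat{S}$, and finally to promote this to the full operator $H^\omega$ on $\Sigma^\omega$ by a density argument using that $\bigcup_{B}\Hi^\omega_B$ is dense in $\Hi$. Concretely, I would fix a finite $B\subset\Nc$ and write $H^\omega=H+\lambda P_B$ as in \eqref{opDefEq2}, where $H$ is the $w_1$-independent part, $\lambda=w_1/\sqrt{|B|}$ has an absolutely continuous conditional law given the remaining randomness, $P=P_B$ and $\Hi_P=\Hi^\omega_B$. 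Since $H$ differs from $H^\omega$ only by a finite rank term supported in $B$, the definition of $\Sigma^\omega$ gives $\Sigma^\omega\subset\hat{S}$ for a.e. $\omega$, so it is enough to control eigenvalues lying in $\hat{S}$.

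First I would verify the hypothesis of Lemma \ref{lem1UniMultRes}, namely that for a.e. $\lambda$ every eigenvalue of $H_\lambda$ in $J$, regarded on $\Hi_P$, has multiplicity at least $K$. By Fubini the Theorem's hypothesis gives, for a.e. conditioning, that the full-space multiplicity of $H_\lambda$ on $J$ is $\geq K$ for a.e. $\lambda$. The only discrepancy between the full-space and the $\Hi_P$-multiplicity is $\dim\ker_{\Hi_P^\perp}(H-E)$, coming from eigenvectors of the unperturbed part $H$; since distinct eigenvalues have orthogonal eigenspaces in a separable space, $\sigma_{pp}(H)\cap J$ is countable, and for each fixed $E_0$ in it, Lemma \ref{lem0KerEquiRes} shows $E_0$ can be an eigenvalue of $H_\lambda|_{\Hi_P}$ only for the finitely many $\lambda$ with $-1/\lambda\in\sigma(G(E_0+\iota 0))$. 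Hence for a.e. $\lambda$ no such coincidence occurs, the $\Hi_P^\perp$-contribution vanishes at the relevant energies, and the full-space bound transfers to $\Hi_P$. Lemma \ref{lem1UniMultRes} then yields that the roots of $\det(G(z)-xI)$ have multiplicity $\geq K$ for a.e. $z\in\CC^+$.

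With the resolvent condition in hand, Lemma \ref{lem2StabPPSpec} amplifies the conclusion from the thin set $J$ to all of $\hat{S}$: the multiplicity of every eigenvalue in $\hat{S}$ of $H_\lambda|_{\Hi_P}$ is $\geq K$ for a.e. $\lambda$. Undoing the conditioning by Fubini, for a.e. $\omega$ every eigenvalue in $\hat{S}\supset\Sigma^\omega$ of $H^\omega|_{\Hi^\omega_B}$ has multiplicity at least $K$. The final step is the density argument. Given an eigenvalue $E\in\Sigma^\omega$ of the full operator with eigenvector $\psi$, I would choose a finite $B$ and $\phi\in\Hi^\omega_B$ with $\|\psi-\phi\|$ small; since $\Hi^\omega_B$ reduces $H^\omega$, the vector $E_{H^\omega}(\{E\})\phi$ again lies in $\Hi^\omega_B$, is nonzero because it is close to $E_{H^\omega}(\{E\})\psi=\psi$, and is a genuine eigenvector of $H^\omega|_{\Hi^\omega_B}$ at $E$. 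Thus $E$ is an eigenvalue of the restriction, whose multiplicity is $\geq K$ by the previous step, and as the eigenspace of the restriction sits inside $\ker_\Hi(H^\omega-E)$ the full multiplicity at $E$ is at least $K$ as well.

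The step I expect to be the main obstacle is making the passage from the restrictions $H^\omega|_{\Hi^\omega_B}$ back to $H^\omega$ fully rigorous: one must control the exceptional $\omega$-sets simultaneously over the countably many finite $B$, confirm $\Sigma^\omega\subset\hat{S}$ for each associated unperturbed $H$, and guarantee that the spectral-projection argument produces an \emph{exact} eigenvector of the restriction rather than an approximate one. The transfer of the multiplicity hypothesis from the full space to $\Hi_P$ that feeds Lemma \ref{lem1UniMultRes}, via countability of $\sigma_{pp}(H)$ together with absolute continuity of the conditional law of $\lambda$, is the other delicate point, and it is precisely here that the assumption $\mathrm{supp}(g_n)=\RR$ is genuinely used.
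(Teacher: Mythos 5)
Your proposal follows the paper's own route almost step for step: the reduction to a single coupling constant via \eqref{opDefEq2}, the verification of the hypothesis of Lemma \ref{lem1UniMultRes}, the amplification by Lemma \ref{lem2StabPPSpec} from $J$ to all of $\hat{S}$, and the final density argument over increasing finite $B$ are exactly the paper's skeleton (the paper runs it with auxiliary couplings $\lambda_1,\dots,\lambda_N$ added to $H^\omega$ and removed at the end by independence, which is a cosmetic difference). Your transfer of the full-space multiplicity hypothesis to the $\Hi_P$-multiplicity, via countability of $\sigma_{pp}(H)$ and Lemma \ref{lem0KerEquiRes}, is a point where you are in fact more careful than the paper.

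The genuine gap is the claim that ``the definition of $\Sigma^\omega$ gives $\Sigma^\omega\subset\hat{S}$ for a.e.\ $\omega$.'' Membership in $\Sigma^\omega$ only asserts finiteness of $\lim_{\epsilon\downarrow 0}\norm{(H^\omega+\sum_{x\in B}\lambda_x P_x-E-\iota\epsilon)^{-1}P_n}$ for \emph{almost every} choice of couplings $\{\lambda_x\}$, whereas membership in $\hat{S}$ demands this finiteness (together with existence of $G(E+\iota 0)$) at the \emph{one specific} coupling $\lambda_x\equiv -w_1/\sqrt{|B|}$ that strips off the randomness in the $B$ direction; that single point can lie in the exceptional null set. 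Concretely, if $E$ is an eigenvalue of the $\lambda$-free part $H$ with normalized eigenvector $\varphi$ satisfying $P_B\varphi\neq 0$, then $\norm{(H-E-\iota\epsilon)^{-1}P_B}\geq \norm{P_B\varphi}/\epsilon\rightarrow\infty$, so $E\notin\hat{S}$, while nothing prevents $E\in\Sigma^\omega$, since for a.e.\ $\mu$ the operator $H+\mu P_B$ no longer has $E$ as an eigenvalue and its boundary resolvents can be finite. Moreover $H$ does not depend on $\lambda$, so this failure persists for \emph{every} value of $\lambda$ compatible with the given conditioning; averaging over $\omega$ does not repair the inclusion.

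What is true, and what the paper uses, is only the weaker statement $Leb(\Sigma^\omega\setminus\hat{S})=0$, and this by itself does not finish the proof, because eigenvalues can a priori sit inside a Lebesgue-null set. The missing ingredient is spectral averaging in the coupling $\lambda$ (the same tool already used inside Lemma \ref{lem2StabPPSpec}): for any fixed Lebesgue-null set $A$ one has $P_B E_{H_\lambda}(A)P_B=0$ for a.e.\ $\lambda$, and since $P_B\Hi$ generates $\Hi^\omega_B$, the restriction of $H_\lambda$ to $\Hi^\omega_B$ then has no spectrum at all in $A=\Sigma^\omega\setminus\hat{S}$. Only after this step do all eigenvalues of the restriction lying in $\Sigma^\omega$ actually lie in $\hat{S}$, where your bound from Lemma \ref{lem2StabPPSpec} applies; the rest of your argument (in particular the spectral-projection density argument, which does produce exact eigenvectors) then goes through. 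Note that your earlier countable-avoidance trick cannot substitute for this, since $\Sigma^\omega\setminus\hat{S}$ is merely Lebesgue-null and need not be countable.
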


\begin{proof}
The proof is done for an increasing family of $H^\omega$-invariant Hilbert 
subspaces, the theorem then follows by a density argument.
Let $\{n_i\}_{i\in\NN}$ be an enumeration of $\Nc$ and define
$$H^{\omega, \lambda}_N=H^\omega+\sum_{i=1}^N \lambda_i P_{n_i},$$
and set $Q_N=\sum_{i=1}^N P_{n_i}$. Denote 
$$\Hi^{\omega}_N=\clsp{f(H^\omega)\phi:f\in C_c(\RR),\phi\in Q_N\Hi},$$
and let $Q^\omega_N:\Hi\rightarrow\Hi^\omega_N$ be the canonical projection 
onto $\Hi^\omega_N$.  For any $\psi\in Q_N\Hi$ we have
\begin{align*}
\dprod{\phi}{e^{\iota t H^{\omega, \lambda}_N}\psi}&=\dprod{\phi}{e^{\iota t H^\omega}\psi}+\iota \sum_{i=1}^N \lambda_i\int_0^t \dprod{\phi}{e^{\iota (t-s)H^\omega} P_{n_i} e^{\iota s H^{\omega,\lambda}_N}\psi }ds\\
&=\dprod{e^{-\iota t H^\omega}\phi}{\psi}+\iota \sum_{i=1}^N \lambda_i\int_0^t \dprod{e^{-\iota (t-s)H^\omega}\phi}{ P_{n_i} e^{\iota s H^{\omega,\lambda}_N}\psi }ds\\
&=0\qquad\forall t\in\RR
\end{align*}
for $\phi\in (\Hi^\omega_N)^\perp$, i.e $\Hi^\omega_N$ is  also $H^{\omega, \lambda}_N$-invariant. 
Following the decomposition of \eqref{opDefEq2} we have a change of variables from $\lambda_1,\cdots,\lambda_N$ to $\eta_1,\cdots,\eta_N$ using an orthogonal matrix such that 
$$H^{\omega, \lambda}_N=H^\omega+\sum_{i=2}^N \eta_i\left(\sum_{j=1}^N u_{j,i} P_{n_i}\right)+\frac{\eta_1}{\sqrt{N}} Q_N$$
where $\eta_1=\frac{1}{\sqrt{N}}\sum_{i=1}^N \lambda_i$. So writing 
$$H^{\omega,\eta}_N=H^\omega+\sum_{i=2}^N \eta_i\left(\sum_{j=1}^N u_{j,i} P_{n_i}\right),$$
we have $H^{\omega, \lambda}_N=H^{\omega,\eta}_N+\frac{\eta_1}{\sqrt{N}} Q_N$. To distinguish the variable $\eta_1$ denote 
$$H^{\omega,\eta,\kappa}_N=H^{\omega,\eta}_N+\kappa Q_N.$$

Since $J\subset \Theta_p$, the definition of $\Theta_p$ implies 
$$\norm{(H^{\omega,\eta,\kappa}_N-E-\iota 0)^{-1}Q_N}<\infty\qquad a.e~E\in J,$$
for almost all $\omega,\eta,\kappa$. Hence using the lemma \ref{lem1UniMultRes} for $H^{\omega,\eta,\kappa}_N$,
we conclude that all the roots of the polynomial (in $x$)
$$\det(Q_N(H^{\omega,\eta}_N-z)^{-1}Q_N-xI)$$
have multiplicity at least $K$, where $Q_N(H^{\omega,\eta}_N-z)^{-1}Q_N$ is viewed as a linear operator on $Q_N\Hi$. 
With this observation, hypothesis of lemma \ref{lem2StabPPSpec} is satisfied.
So we conclude that the multiplicity of spectrum of $H^{\omega,\eta,\kappa}_N$ restricted to $\Hi^\omega_N$ (which is $H^{\omega,\eta,\kappa}_N$-invariant) in 
\begin{align*}
\hat{S}^{\omega,\eta}=\{E\in\RR: Q_N(H^{\omega,\eta}_N-E-\iota 0)^{-1}Q_N\text{ exists and finite, and}\\
\norm{(H^{\omega,\eta}_N-E-\iota 0)^{-1}Q_N}<\infty\}
\end{align*}
is bounded below by $K$, for almost all $\kappa$. Since the second condition 
on the set $\hat{S}^{\omega,\eta}$ is precisely the Lemma \ref{lem0KerEquiRes}
criterion for pure point spectrum, we have 
$$Leb(\Sigma^{\omega,\eta}\setminus \hat{S}^{\omega,\eta})=0,$$
where $\Sigma^{\omega,\eta}$ is same as $\Sigma^{\tilde{\omega}}$ where $\tilde{\omega}$ is such that $H^{\tilde{\omega}}=H^{\omega,\eta}_N$.

Hence we conclude that for almost all $\lambda$, the multiplicity of the operator $H^{\omega,\lambda}_N$ restricted on the invariant subspace $\Hi^\omega_N$ is bounded below by $K$.
This also implies that the multiplicity of the operator $H^{\omega}$ restricted onto the invariant subspace $\Hi^\omega_N$ is bounded below by $K$ for almost all $\omega$. 
This follows because $\{\omega_{n_i}\}_{i=1}^N$ are independent of $\{\omega_n\}_{n\in\Nc\setminus\{n_i:1\leq i\leq N\}}$.
Now using the inclusion $\Hi^{\omega}_N\subseteq \Hi^{\omega}_{N+1}$ for all $N$ which is implied by our hypothesis \ref{hyp1} on $P_n$,, the subspace
$$\tilde{\Hi}^{\omega}:=\bigcup_{N\in\NN} \Hi^{\omega}_N$$
is $H^\omega$-invariant subspace of $\Hi$.
By above argument it is clear that the multiplicity of spectrum, 
in $\Sigma^\omega$ for $H^\omega$ restricted on closure of $\tilde{\Hi}^{\omega}$, is bounded below by $K$. 
We get the conclusion of the corollary by observing that 
$\tilde{\Hi}^{\omega}$ is dense in $\Hi$ because of $Q_N\rightarrow I$ 
strongly.

\end{proof}  

\leftline{\bf Proof of Theorem \ref{mainThm} :}
By hypothesis of the theorem, the hypothesis of Corollary \ref{cor1MultCutoffOp} is satisfied hence part (1) of the theorem is proved.
For the second part, Corollary \ref{cor2ppSpecMult} gives the proof of the statement.

\qed

\leftline{\bf Proof of Theorem \ref{simplicity}:} 
By the definition of $\Sigma^\omega$,  
our Lemma \ref{lem0KerEquiRes} and the comment at the beginning of
the proof of Lemma \ref{lem0KerEquiRes} together with    
the $H^\omega$ invariance of $\Hi^\omega_B$ imply that 
$H^\omega_{\Hi^\omega_B}$ has no continuous component of spectrum in 
$\Sigma^\omega$. 
We then start with a proof of simplicity of the spectrum of 
$H^\omega_B$ in  $I$.  To this end take $I = [a, b]$ and  set 
$$I_{N,n}=\left[a+\frac{b-a}{N}n,a+\frac{b-a}{N}(n+2)\right],  ~~ n\in\{0,\cdots,N-2\}, ~~ N \in \NN. 
$$
Then, using the Minami estimate we have for each $N \in \NN$,
\begin{align*}
&\PP(\{\omega: \exists E\in I\cap\sigma(H^\omega_B)~\text{ such that $E$ has multiplicity higher than one}\})\\
&\qquad\leq \PP(\left\{ \omega :\eta_{B,I_{N,n}}(\omega)\geq 2 ~~ \mathrm{for ~ some} ~~  n\in\{0,\cdots,N-2\}\right\})\\
&\qquad\leq \sum_{n=0}^{N-2}\PP(\eta_{B,I_{N,n}})\leq  \frac{4C|B|^2|I|^2}{N},
\end{align*}
which converges to zero as $N \rightarrow \infty$.

Following the steps involved in obtaining \eqref{opDefEq2}, we can write
$$H^\omega_B=H^{\tilde{\omega}}_B+w P_B,$$
where $w=\frac{1}{|B|}\sum_{n\in B}\omega_n$ and $w$ is real random 
variable with an absolutely continuous distribution, depending on 
$\tilde{\omega}$, having non-zero density at all points.

Since $H^\omega_B$ is an operator on $P_B\Hi$ and $P_B$ acts as the 
 identity operator on $P_B\Hi$, we have 
$\sigma(H^{\tilde{\omega}}_B+w P_B)=w+\sigma(H^{\tilde{\omega}}_B).$
Combining these two facts we see that any eigenvalues of 
$\sigma(H^{\tilde{\omega}}_B)\cap (I-w)$ is almost surely simple for 
almost all $w$. Since $\cup_{w\in\RR}I-w=\RR$, we conclude that 
$\sigma(H^{\tilde{\omega}}_B)$ has simple spectrum.
Since $H^\omega_B-H^{\tilde{\omega}}_B$ is a multiple of  identity, 
we conclude that $H^\omega_B$ also has simple spectrum a.e. $\omega$,
let us denote this set of full measure to be $\Omega_B$.

It remains to show that the  simplicity of spectrum of 
$H^\omega_B$ implies the simplicity of eigenvalues of  
$P_B(H^\omega-z)^{-1}P_B$, as a linear operator on $P_B\Hi$
for a.e. $\omega$. 

The simplicity of the spectrum of $P_B(H^\omega-z)^{-1}P_B$
follows if we show that the  discriminant $\Delta^\omega(z)$ of the polynomial 
$$F^\omega_z(x)=\det(P_B(H^\omega-z)^{-1}P_B-xI)$$
is non-vanishing.  Now, $\Delta^\omega(z)$ 
can be written as the determinant of the Sylvester matrix of $F^\omega_z$ 
and it's derivative, which is are analytic functions of $z$ in $\CC^{+}$. 

Since $H^\omega_B$ has simple spectrum, using the representation \eqref{cor1MultEq1} for $P_B(H^\omega-z)^{-1}P_B$ we conclude that $\Delta^\omega(z)\neq 0$ for $\Im z\gg 1$, 
which implies that $\Delta^\omega(z)\neq 0$ for almost all $z\in\CC^{+}$.  
Hence, by Privalov uniqueness theorem \cite[page 552]{EM},  
$$\lim_{\epsilon\downarrow0}\Delta^\omega(E+\iota \epsilon)\neq 0\qquad a.a~E\in\RR,$$ 
which gives the simplicity of spectrum of $P_B(H^\omega-E-\iota 0)^{-1}P_B$ for almost all $E$.
So using Lemma \ref{lem0KerEquiRes} we conclude that the operator $H^{\omega,\lambda}_B$ restricted on the invariant subspace $\Hi^\omega_B$ on the set $\Sigma^\omega$ has simple spectrum.


So using the fact that $\Hi^\omega_B$ is $H^{\omega,\lambda}_B$-invariant, 
we get the simplicity of spectrum of $H^{\tilde{\omega}}$ in $\Sigma^{\tilde{\omega}}$ on the subspace $\Hi^{\tilde{\omega}}_B$ for almost all $\tilde{\omega}$.

\qed

\leftline{\bf Proof of Theorem \ref{uniformsingular}:}

By using argument from the proof of theorem \ref{simplicity}, we conclude 
that the multiplicity of the spectrum for $H^\omega_B$ is bounded above by $K$. 
Hence using the decomposition \eqref{cor1MultEq1} of $P_B(H^\omega-z)^{-1}P_B$ and following the argument of the corollary \ref{cor1MultCutoffOp} we conclude that the multiplicity of any roots of the polynomial (in $x$)
$$F_z(x)=\det(P_B(H^\omega-z)^{-1}P_B-xI)$$
is bounded above by $K$ for almost all $z\in\CC^{+}$ almost surely, where $P_B(H^\omega-z)^{-1}P_B$ is viewed as a linear operator on $P_B\Hi$ for any finite $B\subset \Nc$.
So using \cite[Theorem 1.1]{AD1} we conclude that the maximum multiplicity of $H^\omega$ is bounded above by $K$.
This completes the proof of the theorem.

\qed


\noindent{\bf Proof of Theorem \ref{CompoundPoisson}:}

The assumption that the spectral multiplicity in 
$J \subset \RR\setminus\Sigma_c$ is bigger than one implies that the spectral multiplicity of
$\sigma(H^\omega_{B})$ is bigger than one for any finite $B\subset\Nc$, by Theorem \ref{mainThm}. 
Therefore for any finite subset $B \subset {\mathcal N}$, 
 and any interval $I \subset \RR$, we have
$$
\PP(\{ \omega : \eta_{B, \omega}(J) = 1 \}) = 0, 
$$
showing that 
$$
\PP(\{ \omega : \eta_{k,E, I_N}(\omega) = 1 \}) = 0, ~ \forall k \leq m_N, ~~ \forall ~~ N \in \NN. 
$$
Therefore, if $\{\eta_{k, E, I_N}^\omega\}$ is a {\it uniformly asymptotically
negligible array} of random variables, (see \cite[Section 11.2]{VJ},
then Theorem 11.2 of  \cite{VJ}, applied to random variables, shows
that the limit 
$$
X_{\omega,E} = \lim_{N\rightarrow \infty} \sum_{k=1}^{m_N}\eta_{k, E, I_N}^\omega
$$  
with the convergence in distribution, is not a Poisson random variable. 
The  proof of \cite[theorem 5.1]{HK}, also gives an alternative proof of
the Theorem.  

\qed

\section*{Acknowledgments}
We thank Dhriti R. Dolai for discussions and Ashoka University for local hospitality to AM where part of this work was done.

\bibliographystyle{unsrt}

\end{document}